\numberwithin{equation}{section} 
\newcommand{\bea}{\begin{eqnarray}}
\newcommand{\eea}{\end{eqnarray}}
\newcommand{\ba}{\begin{array}}
\newcommand{\ea}{\end{array}}
\newcommand{\edc}{\end{document}}
\newcommand{\bc}{\begin{center}}
\newcommand{\ec}{\end{center}}
\newcommand{\be}{\begin{equation}}
\newcommand{\ee}{\end{equation}}
\newcommand{\dsf}{\displaystyle\frac}
\def\cb{{\mathcal B}}
\def\ce{{\mathcal E}}
\def\cg{{\mathcal G}}
\def\bc{{\mathbb C}}
\def\bn{{\mathbb N}}
\def\bq{{\mathbb Q}}
\def\br{{\mathbb R}}
\def\bz{{\mathbb Z}}
\def\a{\alpha}
  \def\G{\Gamma}
\def\l{\lambda} 
\def\k{\kappa}
\def\m{\mu}
\def\n{\nu}
\def\s{\sigma}
\def\w{\omega} \def\Om{\Omega}
\def\h{{\mathbf{h}}}
\def\sb{{\mathbf{s}}}
\newtheorem{thm}{Theorem}[section]
\newtheorem{lem}[thm]{Lemma}
\newtheorem{cor}[thm]{Corollary}
\newtheorem{prop}[thm]{Proposition}
\theoremstyle{remark}
\newtheorem{rem}{Remark}[section]
\date{\today}
\begin{document}

\title[On $P$-adic dynamical systems]
{On chaotic behavior of the $P$-adic generalized Ising mapping and
its application}


\author{Farrukh Mukhamedov}
\address{Farrukh Mukhamedov\\
Department of Mathematical Sciences\\
College of Science, The United Arab Emirates University\\
P.O. Box, 15551, Al Ain\\
Abu Dhabi, UAE} \email{{\tt far75m@gmail.com} {\tt
farrukh.m@uaeu.ac.ae}}

\author{Hasan Ak\i n}
\address{Hasan Ak\i n, Ceyhun Atuf Caddesi 1164. Sokak 9/ 4
╟ankaya, Ankara, Turkey.} \email{{\tt akinhasan25@gmail.com}}

\author{Mutlay Dogan}
\address{Mutlay Dogan, Department of Mathematics, Ishik University, Iraq}

\begin{abstract}

In the present paper, by conducting research on the dynamics of the
$p$-adic generalized Ising mapping corresponding to renormalization
group associated with the $p$-adic Ising-Vannemenus model on a
Cayley tree, we have determined the existence of the fixed points of
a given function. Simultaneously, the attractors of the dynamical
system have been found. We have come to a conclusion that the
considered mapping is topologically conjugate to the symbolic shift
which implies its chaoticity and as an application, we have
established the existence of periodic $p$-adic Gibbs measures for
the $p$-adic Ising-Vannemenus model.

\vskip 0.3cm \noindent {\it
Mathematics Subject Classification}: 46S10, 82B26, 12J12, 39A70, 47H10, 60K35.\\
{\it Key words}: $p$-adic numbers; $p$-adic dynamical system; chaos;
periodic.
\end{abstract}

\maketitle

\section{introduction}

In \cite{Egg} the thermodynamic behavior of the central site of an
Ising spin system with ferromagnetic nearest-neighbor interactions
on a Cayley tree was studied by recursive methods which consequently
opened new perspectives between the recursion approach and the
theory of dynamical systems. The existence of the phase transition
is closely connected to the existence of the chaotic behavior of the
associated dynamical system which is governed by the Ising-Potts
function. Investigating the dynamics of this function has been the
object of no small amount of study in the real and complex settings.
This deceptively simple family of rational functions has given rise
to a surprising number of interesting dynamical features (see for
example \cite{BG,FTC,Kap,Monr}). Therefore, the combination of
statistical mechanics tools and methods adopted from dynamical
systems are one of the most promising directions in the theory of
phase transitions. One of such tools is the renormalization group
(RG) which has had a profound impact on modern statistical physics.
This method appeared after WilsonТs seminal work in the early 1970's
\cite{Wil}, based also on the ground breaking foundations laid by
Kadanoff, Widom, Michael Fisher \cite{Fish}.

On the other hand, there are many investigations that have been
conducted to discuss and debate the question due to the assumption
that $p$-adic numbers provide a more exact and more adequate
description of microworld phenomena \cite{Kh1,VVZ}) Consequently,
various models in physics described in the language of $p$-adic
analysis (see for example, \cite{ACK,ADFV})), and numerous
applications of such an analysis to mathematical physics have been
studied in \cite{Kh1,KKOJ,KOJ}. These investigations proposed to
study new probability models (namely $p$-adic probability), which
cannot be described using ordinary Kolmogorov's probability theory
\cite{KL}. Using that, $p$-adic measure theory in \cite{M13,M15,RK2}
the theory of $p$-adic statistical mechanics has been been
developed. For complete review of the $p$-adic mathematical physics
we refer to \cite{Drag}.

On the  other hand, recently, polynomials and rational maps of
$p$-adic numbers $\bq_p$ have been studied as dynamical systems over
this field \cite{B0,B1}. It turns out that these $p$ -adic dynamical
systems are quite different to the dynamical systems in Euclidean
spaces (see for example, \cite{AKh,FL2,HY,KhN,L,Sil1} and their
bibliographies therein). In theoretical physics, the interest in
$p$-adic dynamical systems was started with the development of
$p$-adic models \cite{M13,MK16,KhN}. In these investigations, the
importance of detecting chaos was stressed in the $p$-adic setting
\cite{Kh22,TVW,Wo}. In \cite{M15} the renormalization group method
has been developed to study phase transitions for several $p$-adic
models on Cayley trees. In \cite{MK16} we have studied some
particular cases of the Ising-Potts function and showed its
chaoticity.

So presently, we study dynamics of a $p$-adic rational
mapping\footnote{We note that the dynamical properties of the fixed
points of some $p$-adic rational maps have been studied in
\cite{B0,B1,FL4,KM1,RL,RS}. Some results on  the global structure of
rational maps on $\bq_p$ can be found in \cite{DS11,Yu}.} (see
\eqref{gg}) \textit{the generalized Ising mapping} associated with
the Ising-Vannimenus model on a Cayley tree \cite{MDA}. The
existence of the phase transition for this model has been
investigated in \cite{MDA,MSK1}. In section 3 we study the existence
of fixed points of the mentioned mapping. In section 4 the dynamical
behavior of the fixed points is explored and particularly, the basin
of attraction of the attractive fixed point is described. In section
5, we show that the dynamical system \eqref{gg} is topologically
conjugate to the full shift, and hence it is chaotic. The obtained
result opens certain perspectives in the study of generalized
self-similar sets in the $p$-adic setting \cite{MK161}.  As an
application of the result of section 5, in the last section 6, we
illustrate the existence of periodic $p$-adic Gibbs measures for the
Ising-Vannimenus model. Note that a construction of 2-periodic
$p$-adic Gibbs measures has been given in \cite{MSK1}, but using
that construction, it is extremely difficult to find other kinds of
periodic measures. Note that, in the $p$-adic setting, due to the
lack of convex structure of the set of $p$- adic Gibbs measures, it
is quite difficult to constitute a phase transition with some
features of the set of $p$-adic Gibbs measures. The result of
section 6 implies that the set of $p$-adic Gibbs measures is huge.
Moreover, the advantage of the present work allows us to find lots
of periodic Gibbs measures. We point out that some numerical
simulations predict \cite{V} the chaotic behavior of the mapping in
the real setting, but these are no rigorous proofs of this kind of
fact.

\section{Preliminaries}
\subsection{$p$-adic numbers}
In what follows $p$ will be a fixed prime number, and by $\bq_p$ it is denoted the field of
$p-$adic numbers, which is a completion of the rational numbers
$\bq$ with respect to the norm $|\cdot|_p:\bq\to\br$
given by
\begin{eqnarray}\label{p-norm}
|x|_p=\left\{
\begin{array}{c}
  p^{-r} \ x\neq 0,\\
  0,\ \quad x=0,
\end{array}
\right.
\end{eqnarray}
here, $x=p^r\frac{m}{n}$ with $r,m\in\bz,$ $n\in\bn$,
$(m,p)=(n,p)=1$. The number $r$ is called \textit{the $p-$order} of
$x$ and it is denoted by $ord_p(x)=r.$ The absolute value
$|\cdot|_p$, is non-Archimedean, meaning that it satisfies the
ultrametric triangle inequality $|x + y|_p \leq \max\{|x|_p,
|y|_p\}$.

Any $p$-adic number $x\in\bq_p$, $x\neq 0$ can be uniquely represented in the form
\begin{equation}\label{canonic}
x=p^{ord_p(x)}(x_0+x_1p+x_2p^2+...),
\end{equation}
where $x_j$ are integers such that $0\leq x_j\leq p-1$ for all
$j=0,1,2,\dots$ and $x_0>0$,
 In this case $|x|_p=p^{-ord_p(x)}$.


\begin{lem}\label{sqr}
Let $p\geq 3$, $a\in \mathbb{Q}_{p}$ and
$a=p^{ord_p(a)}(a_{0}+a_{1}p+a_{2}p^{2}+...)$. Then $\sqrt{a}$
exists in $\mathbb{Q}_{p}$ if and only if

i) $ord_p(a)\in2\mathbb{Z}_{p}$

ii) $x^{2}\equiv a_{0}(mod p)$ has a solution in $\mathbb{Z}$.
\end{lem}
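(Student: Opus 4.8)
The plan is to prove the standard criterion for existence of square roots in $\bq_p$ for odd primes, using Hensel's lemma as the main engine. First I would reduce to a unit: writing $a = p^{ord_p(a)} u$ where $u = a_0 + a_1 p + \cdots$ is a $p$-adic unit with $a_0 \neq 0$, I observe that $\sqrt{a}$ exists if and only if $ord_p(a)$ is even \emph{and} $\sqrt{u}$ exists, since $\sqrt{p^{2k}u} = p^k\sqrt{u}$. This immediately explains necessity of condition (i): if $\sqrt{a} = b$ then $ord_p(a) = 2\, ord_p(b)$ is even. Thus the substance of the lemma is to show that for a unit $u$, the equation $x^2 = u$ is solvable in $\bq_p$ (equivalently in $\bz_p^\times$) if and only if $x^2 \equiv a_0 \pmod p$ has a solution in $\bz$, which is condition (ii).

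For the necessity of (ii), I would argue that any solution $b \in \bq_p$ of $b^2 = u$ must itself be a unit (by comparing $p$-orders), so reducing the equation $b^2 = u$ modulo $p$ via the residue map $\bz_p \to \bz/p\bz$ yields $\bar b^2 \equiv a_0 \pmod p$, giving the required integer solution. The heart of the matter is sufficiency: assuming $x_0^2 \equiv a_0 \pmod p$ with $x_0 \not\equiv 0$ (automatic since $a_0 \neq 0$ and $p \nmid a_0$ for a unit), I would lift $x_0$ to a genuine square root in $\bz_p$. The clean tool here is Hensel's lemma applied to $f(x) = x^2 - u$: one has $f(x_0) \equiv 0 \pmod p$ and $f'(x_0) = 2x_0$, which is a unit precisely because $p \geq 3$ (so $2$ is invertible) and $x_0 \not\equiv 0 \pmod p$. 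Hence $|f(x_0)|_p < |f'(x_0)|_p^2 = 1$, the hypotheses of Hensel's lemma are satisfied, and we obtain $b \in \bz_p$ with $b^2 = u$; then $p^{ord_p(a)/2}\, b$ is the desired square root of $a$.

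I expect the main obstacle, or rather the point requiring care, to be the hypothesis $p \geq 3$: it enters precisely in guaranteeing that $f'(x_0) = 2x_0$ is a $p$-adic unit, i.e.\ that $2$ is invertible modulo $p$. This is exactly why the lemma excludes $p = 2$, where Hensel's lemma in its simplest form fails for $x^2 - u$ and one needs the more delicate congruence $u \equiv 1 \pmod 8$. Since the statement restricts to $p \geq 3$, this difficulty does not actually arise, but I would flag explicitly where oddness of $p$ is used so the argument is transparent. If one prefers to avoid invoking Hensel's lemma as a black box, an alternative is to construct the square root by hand: inductively solve $x_{n+1}^2 \equiv u \pmod{p^{n+1}}$ from $x_n^2 \equiv u \pmod{p^n}$, using that the correction step reduces to solving a \emph{linear} congruence whose coefficient $2x_0$ is invertible mod $p$; the resulting Cauchy sequence $(x_n)$ converges in the complete field $\bq_p$ to the desired root. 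Either route completes the equivalence.
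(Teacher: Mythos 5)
Your proof is correct and complete: the reduction to a unit, the necessity arguments, and the Hensel lift of $f(x)=x^2-u$ (with $f'(x_0)=2x_0$ a unit exactly because $p\geq 3$ and $a_0\not\equiv 0 \pmod p$) are all sound, and you correctly isolate where oddness of $p$ enters. For comparison, the paper offers no proof at all --- Lemma \ref{sqr} is stated as a known classical fact (it is the standard square-root criterion found in references such as Koblitz or Vladimirov--Volovich--Zelenov), and your Hensel's-lemma argument is precisely the canonical proof those sources give, so there is no divergence to flag; the only cosmetic point worth noting is that condition (i) in the paper, ``$ord_p(a)\in 2\mathbb{Z}_p$,'' is evidently a typo for $ord_p(a)\in 2\mathbb{Z}$, which you implicitly and correctly read as evenness of the order.
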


In what follows, for the sake of simplicity, we always assume that
$p\geq 3$, since $p=2$ is considered as a pathological case.

For each $a\in \bq_p$, $r>0$ we denote
$$ B_r(a)=\{x\in \bq_p : |x-a|_p< r\}, \ \  S_r(a)=\{x\in\bq_p:\
|x-a|_p=r\}$$ and the set of all {\it $p$-adic integers}
$$\bz_{p}=\left\{ x\in \bq_{p}:\
|x|_{p}\leq1\right\}.$$ The set $\bz_p^*=\bz_p\setminus p\bz_p$ is
called a set of $p$-adic units. Recall that the $p$-adic logarithm
is defined by the series
$$
\log_p(x)=\log_p(1+(x-1))=\sum_{n=1}^{\infty}(-1)^{n+1}\dsf{(x-1)^n}{n},
$$
which converges for every $x\in B_1(1)$. And the $p$-adic
exponential is defined by
$$
\exp_p(x)=\sum_{n=0}^{\infty}\dsf{x^n}{n!},
$$
which converges for every $x\in B_{p^{-1/(p-1)}}(0)$. Note that, in
the considered setting (i.e. $p\geq 3$), due to the discreteness of
the norm, we have $B_{p^{-1/(p-1)}}(0)=B_1(0)$.

\begin{lem}\label{21} (\cite{Ko},\cite{VVZ})  Let $x\in
B_1(0)$ then we have $$ |\exp_p(x)|_p=1,\ \ \
|\exp_p(x)-1|_p=|x|_p<1, \ \ |\log_p(1+x)|_p=|x|_p<p^{-1/(p-1)} $$
and $$ \log_p(\exp_p(x))=x, \ \ \exp_p(\log_p(1+x))=1+x. $$
\end{lem}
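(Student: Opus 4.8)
The plan is to treat each of the five assertions by exploiting the isosceles (ultrametric) principle: for a convergent $p$-adic series whose terms have pairwise distinct norms, the norm of the sum equals the norm of the single largest term. First I would record the domain reduction that the statement implicitly uses and that the text has already observed: since $p\geq 3$ we have $1/(p-1)\leq 1/2$, and because $|\cdot|_p$ takes only the discrete values $p^k$, $k\in\bz$, the condition $|x|_p<p^{-1/(p-1)}$ is equivalent to $|x|_p\leq p^{-1}$, i.e. $B_{p^{-1/(p-1)}}(0)=B_1(0)$. Hence for $x\in B_1(0)$ both defining series converge and all manipulations take place on one and the same ball.

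To obtain $|\log_p(1+x)|_p=|x|_p$, I would show that in $\log_p(1+x)=\sum_{n\geq 1}(-1)^{n+1}x^n/n$ the first term strictly dominates in norm. The $n$-th term has norm $|x|_p^{\,n}\,p^{ord_p(n)}$, so I must check $|x|_p^{\,n}p^{ord_p(n)}<|x|_p$ for every $n\geq 2$, that is $|x|_p^{\,n-1}p^{ord_p(n)}<1$. Using $|x|_p\leq p^{-1}$ this reduces to the elementary valuation bound $ord_p(n)<n-1$, which follows at once from $p^{ord_p(n)}\leq n<p^{\,n-1}$, the last inequality holding for all $n\geq 2$ once $p\geq 3$. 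The ultrametric inequality then yields $|\log_p(1+x)|_p=|x|_p$, and the bound $|x|_p\leq p^{-1}<p^{-1/(p-1)}$ gives the stated strict inequality.

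For the exponential I would argue identically on $\exp_p(x)-1=\sum_{n\geq 1}x^n/n!$. Here the $n$-th term has norm $|x|_p^{\,n}\,p^{ord_p(n!)}$, and Legendre's formula $ord_p(n!)=(n-s_p(n))/(p-1)$, where $s_p(n)$ denotes the base-$p$ digit sum, gives $ord_p(n!)\leq (n-1)/(p-1)\leq (n-1)/2<n-1$ for $n\geq 2$ and $p\geq 3$. As before the $n=1$ term dominates, whence $|\exp_p(x)-1|_p=|x|_p$. Since $|x|_p<1=|1|_p$, one further application of the isosceles principle to $\exp_p(x)=1+(\exp_p(x)-1)$ gives $|\exp_p(x)|_p=1$.

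Finally, the two functional equations $\log_p(\exp_p(x))=x$ and $\exp_p(\log_p(1+x))=1+x$ I would deduce from the corresponding identities of formal power series over $\bq$, which are classical. The point requiring care, and the step I expect to be the main obstacle, is that substituting a $p$-adic argument into these compositions is legitimate: one must justify the interchange of the double summation. This is exactly where the preceding norm estimates pay off, since for $x\in B_1(0)$ every inner and outer series has terms tending to $0$, so the double series converges absolutely in the non-Archimedean sense and may be rearranged freely, and the formal identity then descends to a genuine $p$-adic identity. The estimates $|\exp_p(x)-1|_p=|x|_p<p^{-1/(p-1)}$ and $|\log_p(1+x)|_p=|x|_p<p^{-1/(p-1)}$ proved above guarantee precisely that each composition stays inside the domain of convergence of the outer function, which is what makes the substitution and rearrangement valid.
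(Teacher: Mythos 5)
Your proof is correct, but note that the paper itself offers no proof of this lemma: it is quoted verbatim from the standard references \cite{Ko} and \cite{VVZ}, so there is no in-paper argument to compare against. What you wrote is precisely the textbook proof found in those sources: the dominant-term (isosceles) principle applied to the series for $\log_p$ and $\exp_p$, with the valuation bounds $ord_p(n)<n-1$ and, via Legendre's formula, $ord_p(n!)\leq (n-1)/(p-1)<n-1$ for $n\geq 2$, $p\geq 3$; your opening observation that $B_{p^{-1/(p-1)}}(0)=B_1(0)$ for $p\geq 3$ is even stated explicitly in the paper just before the lemma. You also correctly identified and handled the only delicate point, namely that the functional equations require both that the double series can be rearranged (legitimate non-Archimedean absolute convergence, i.e.\ terms tending to $0$) and that the norm estimates $|\exp_p(x)-1|_p=|x|_p<1$ and $|\log_p(1+x)|_p=|x|_p<p^{-1/(p-1)}$ keep each composition inside the domain of convergence of the outer series, so nothing is missing.
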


%
Denote
\begin{equation}\label{Exp}
\ce_p=\{x\in\bq_p: \  |x-1|_p<1\}.
\end{equation}
Using Lemma \ref{21} one can prove the following facts.

\begin{lem}\label{MMK1}\cite{MSK1}
The set $\ce_p$ has the following properties:
\begin{enumerate}
  \item[(1)] $\ce_p$ is a group under multiplication;
  \item[(2)] $|a-b|_p<1$ for all $a,b\in \ce_p$;
  \item[(3)] if $a,b\in \ce_p$ then it holds $|a+b{{|}_{p}}=1$;
 \item[(4)] if $a\in \ce_p$, then there is an element $h\in B(0,p^{-1/(p-1)})$ such that $a=\exp_p(h);$
 \item[(5)] if $a\in \ce_p$, then $\sqrt{a}\in \ce_p$.
\end{enumerate}
\end{lem}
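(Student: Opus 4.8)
The plan is to treat the five properties in turn, the common engine being the ultrametric inequality together with the observation that every $a\in\ce_p$ satisfies $|a|_p=1$; indeed, if $|a-1|_p<1$ then, since the two summands have distinct absolute values, $|a|_p=|(a-1)+1|_p=1$. For (1) I would first note $1\in\ce_p$. Closure then follows by writing $ab-1=b(a-1)+(b-1)$ and estimating $|ab-1|_p\le\max\{|b|_p|a-1|_p,\,|b-1|_p\}<1$ using $|b|_p=1$; and for inverses, from $a^{-1}-1=a^{-1}(1-a)$ and $|a^{-1}|_p=1$ one gets $|a^{-1}-1|_p=|a-1|_p<1$, so $a^{-1}\in\ce_p$. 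Property (2) is immediate from $a-b=(a-1)-(b-1)$ and the ultrametric inequality.

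Property (3) is the first place where the hypothesis $p\ge3$ is essential. I would write $a+b=(a-1)+(b-1)+2$; since $|a-1|_p<1$ and $|b-1|_p<1$ while $|2|_p=1$ (as $p\neq2$), the summand $2$ strictly dominates and hence $|a+b|_p=1$. For $p=2$ this breaks down, which is precisely the pathology the text sets aside.

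Properties (4) and (5) I would extract from Lemma~\ref{21}. Given $a\in\ce_p$, put $h=\log_p(a)=\log_p(1+(a-1))$; since $a-1\in B_1(0)=B_{p^{-1/(p-1)}}(0)$ the defining series converges, and Lemma~\ref{21} yields both $|h|_p=|a-1|_p<p^{-1/(p-1)}$ and $\exp_p(h)=a$, which is exactly (4). For (5) I would set $\sqrt{a}:=\exp_p(h/2)$; as $|2|_p=1$ we have $h/2\in B(0,p^{-1/(p-1)})$, so this is well defined, multiplicativity of $\exp_p$ on its disc of convergence gives $(\exp_p(h/2))^2=\exp_p(h)=a$, and Lemma~\ref{21} gives $|\exp_p(h/2)-1|_p=|h/2|_p<1$, placing the root back in $\ce_p$. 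Existence of $\sqrt{a}$ could alternatively be read off from Lemma~\ref{sqr}, since $ord_p(a)=0$ is even and $a_0=1$ makes $x^2\equiv a_0\pmod p$ solvable.

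The individual computations are all short, so there is no serious obstacle; the only genuine care needed is bookkeeping on where $p\ge3$ enters—through $|2|_p=1$ in (3), and through the coincidence $B_{p^{-1/(p-1)}}(0)=B_1(0)$ in (4)--(5), which is exactly what permits a logarithm defined a priori only on the small disc to be applied to all of $\ce_p$. The subtlest point is in (5): one must select the branch $\exp_p(h/2)$, equivalently the square root congruent to $1$ modulo $p$, since the other root lies in the coset $-\ce_p$ and not in $\ce_p$ itself.
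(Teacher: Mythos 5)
Your proof is correct and follows exactly the route the paper intends: the paper itself gives no proof, stating only that the facts follow from Lemma \ref{21} (with a citation to \cite{MSK1}), and your ultrametric estimates for (1)--(3) together with the use of $\log_p$/$\exp_p$ and the coincidence $B_{p^{-1/(p-1)}}(0)=B_1(0)$ for (4)--(5) are the standard instantiation of that hint. Your closing remark on branch selection in (5) --- that one must take the root $\exp_p(h/2)\equiv 1 \pmod p$, since the other root lies outside $\ce_p$ --- is a worthwhile precision that the paper leaves implicit.
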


Note that the basics of $p$-adic analysis, $p$-adic mathematical
physics are explained in \cite{Kh1,Ko}.

\subsection{Dynamical systems in $\bq_p$}
In this subsection we recall some standard terminology of the theory
of dynamical systems (see for example \cite{KhN}).

A function $f:B_r(a)\to\bq_p$ is said to be {\it analytic} if it can
be represented by
$$
f(x)=\sum_{n=0}^{\infty}f_n(x-a)^n, \ \ \ f_n\in \bq_p,
$$ which converges uniformly on the ball $B_r(a)$.

Consider a dynamical system $(f,B)$ in $\bq_p$, where $f: B\to B$ is
an analytic function and $B=B_r(a)$ or $\bq_p$. Denote
$x^{(n)}=f^n(x^{(0)})$, where $x^{(0)}\in B$ and
$f^n(x)=\underbrace{f\circ\dots\circ f(x)}_n$.
 If $f(x^{(0)})=x^{(0)}$ then $x^{(0)}$
is called a {\it fixed point}. A fixed point $x^{(0)}$ is called an
{\it attractor} if there exists a neighborhood $U(x^{(0)})(\subset
B)$ of $x^{(0)}$ such that for all points $y\in U(x^{(0)})$ it holds
that $\lim\limits_{n\to\infty}y^{(n)}=x^{(0)}$, where
$y^{(n)}=f^n(y)$. If $x^{(0)}$ is an attractor then its {\it basin
of attraction} is
\begin{eqnarray}\label{attracting}
A(x^{(0)})=\{y\in \bq_p :\ y^{(n)}\to x^{(0)}, \ n\to\infty\}.
\end{eqnarray}
A fixed point $x^{(0)}$ is called {\it repeller} if there  exists
a neighborhood $U(x^{(0)})$ of $x^{(0)}$ such that
$|f(x)-x^{(0)}|_p>|x-x^{(0)}|_p$ for $x\in U(x^{(0)})$, $x\neq
x^{(0)}$.

Let $x^{(0)}$ be a fixed point of an analytic function $f(x)$. Set
$$
\l=\frac{d}{dx}f(x^{(0)}).
$$

The point $x^{(0)}$ is called {\it attracting} if $0\leq |\l|_p<1$,
{\it indifferent} if $|\l|_p=1$, and {\it repelling} if $|\l|_p>1$.

\subsection{$p$-adic sub-shift}

Let $f:X\to\mathbb Q_p$ be a map from a compact open set $X$ of $\mathbb Q_p$ into $\mathbb
Q_p$. We assume that (i) $f^{-1}(X)\subset X$; (ii) $X=\cup_{j\in I}B_{r}(a_j)$ can be written as a finite disjoint union of balls
of centers $a_j$ and of the same radius $r$ such that for each $j\in I$ there is an integer $\tau_j\in\mathbb Z$ such that
\begin{equation}\label{tau}
|f(x)-f(y)|_p=p^{-\tau_j}|x-y|_p,\ \ \ \ x,y\in B_r(a_j).
\end{equation}
For such a map $f$, define its Julia set by
\begin{equation}\label{J}
J_f=\bigcap_{n=0}^\infty f^{-n}(X).
\end{equation}
It is clear that $f^{-1}(J_f)=J_f$ and then $f(J_f)\subset J_f$.

Following \cite{FL2} the triple $(X,J_f,f)$ is called a $p$-adic
{\it weak repeller} if all $\tau_j$ in \eqref{tau}  are nonnegative,
but at least one is positive. We call it a $p$-adic {\it repeller}
if all $\tau_j$ in \eqref{tau} are positive.
 For any $i\in I$, we let
$$
I_i:=\left\{j\in I: B_r(a_j)\cap
f(B_r(a_i))\neq\varnothing\right\}=\{j\in I: B_r(a_j)\subset
f(B_r(a_i))\}
$$
(the second equality holds because of the expansiveness and the
ultrametric property). Then define a matrix $A=(a_{ij})_{I\times
I}$, called \textit{incidence matrix} as follows
$$
a_{ij}=\left\{\begin{array}{ll}
1,\ \ \mbox{if }\ j\in I_i;\\
0,\ \ \mbox{if }\ j\not\in I_i.
\end{array}
\right.
$$
If $A$ is irreducible, we say that $(X,J_f,f)$ is
\textit{transitive}. Here the irreducibility of $A$  means, for
any pair $(i,j)\in I\times I$ there is positive integer $m$ such
that $a_{ij}^{(m)}>0$, where $a_{ij}^{(m)}$ is the entry of the
matrix $A^m$.

Given $I$ and the irreducible incidence matrix $A$ as above, we
denote
$$
\Sigma_A=\{(x_k)_{k\geq 0}: \ x_k\in I,\  A_{x_k,x_{k+1}}=1, \
k\geq 0\}
$$
which is the corresponding subshift space, and let $\sigma$ be the
shift transformation on $\Sigma_A$. We equip $\Sigma_A$ with a
metric $d_f$ depending on the dynamics which is defined as follows.
First for $i,j\in I,\ i\neq j$ let $\k(i,j)$ be the integer such
that $|a_i-a_j|_p=p^{-\k(i,j)}$. It clear that $\k(i,j)<-\log_p(r)$,
where $r$ is the radius of the balls at the beginning of section
2.3. By the ultra-metric inequality, we have
$$
|x-y|_p=|a_i-a_j|_p\ \ \ i\neq j,\ \forall x\in B_r(a_i), \forall
y\in B_r(a_j)
$$
For $x=(x_0,x_1,\dots,x_n,\dots)\in\Sigma_A$ and
$y=(y_0,y_1,\dots,y_n,\dots)\in\Sigma_A$, define
$$
d_f(x,y)=\left\{\begin{array}{ll}
p^{-\tau_{x_0}-\tau_{x_1}-\cdots-\tau_{x_{n-1}}-\k(x_{n},y_{n})}&, \mbox{ if }n\neq0\\
p^{-\k(x_0,y_0)}&, \mbox{ if }n=0
\end{array}\right.
$$
where $n=n(x,y)=\min\{i\geq0: x_i\neq y_i\}$. It is clear that
$d_f$ defines the same topology as the classical metric which is
defined by $d(x,y)=p^{-n(x,y)}$.

\begin{thm}\label{exit}(\cite{FL2}) Let $(X,J_f,f)$ be a transitive $p$-adic weak repeller with incidence matrix $A$.
Then the dynamics $(J_f,f,|\cdot|_p)$ is isometrically conjugate to
the shift dynamics $(\Sigma_A,\sigma,d_f)$.
\end{thm}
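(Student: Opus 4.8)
The plan is to realize the conjugacy as the itinerary (symbolic coding) map and then verify that it is a bijective isometry intertwining $f$ with the shift. Concretely, I would define $\pi\colon J_f\to\Sigma_A$ by sending $x$ to its itinerary $\pi(x)=(x_n)_{n\ge0}$, where $x_n\in I$ is the unique index with $f^n(x)\in B_r(a_{x_n})$. This is well defined: $x\in J_f$ means $f^n(x)\in X$ for every $n$, and $X$ is the \emph{disjoint} union of the $B_r(a_j)$. The word $(x_n)$ is admissible, since $f^{n+1}(x)$ lies in both $f(B_r(a_{x_n}))$ and $B_r(a_{x_{n+1}})$, whence $x_{n+1}\in I_{x_n}$, i.e. $A_{x_n,x_{n+1}}=1$. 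Reading off $f^n(f(x))=f^{n+1}(x)$ gives at once the intertwining $\pi\circ f=\sigma\circ\pi$.

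Next comes the metric bookkeeping. I would first record the standard fact that a map satisfying the local scaling relation \eqref{tau} on $B_r(a_i)$ is a bijection of that ball onto a ball on which distances are rescaled by a fixed power of $p$; together with the identity $I_i=\{j:B_r(a_j)\subset f(B_r(a_i))\}$, this shows that each cylinder $[j_0\cdots j_n]=\{x:f^i(x)\in B_r(a_{j_i}),\,0\le i\le n\}$ is again a ball, mapped bijectively by $f^{\,i}$ onto the successive balls of the word. For $x\ne y$ whose itineraries first differ at position $n$, iterating \eqref{tau} over the common block $x_0,\dots,x_{n-1}$ relates $|x-y|_p$ to $|f^n(x)-f^n(y)|_p=|a_{x_n}-a_{y_n}|_p=p^{-\k(x_n,y_n)}$ through the accumulated factor $p^{-(\tau_{x_0}+\cdots+\tau_{x_{n-1}})}$; this reproduces exactly the quantity defining $d_f(\pi(x),\pi(y))$, with the case $n=0$ being the direct equality $|x-y|_p=|a_{x_0}-a_{y_0}|_p$. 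Hence $\pi$ preserves distances on any pair with distinct images.

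The crux, and the one step that is not pure formalism, is to show that along \emph{every} admissible sequence the partial sums $\tau_{x_0}+\cdots+\tau_{x_{n-1}}$ diverge; this is where the weak-repeller hypothesis and the irreducibility of $A$ enter. An indifferent ball, one with $\tau_j=0$, is mapped isometrically onto a single radius-$r$ ball, so it has at most one outgoing edge in $A$. An orbit remaining in indifferent balls from some time on would therefore be forced along a deterministic path which, $I$ being finite, must close into a cycle of indifferent balls; such a cycle is a closed communicating class, so irreducibility would force it to be all of $I$, contradicting the presence of at least one $\tau_j>0$. Thus every admissible orbit meets an expanding ball infinitely often and the partial sums tend to infinity. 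I expect this to be the main obstacle; the remaining arguments are routine.

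Finally, divergence of the partial sums yields injectivity, since two points with the same itinerary would be driven apart without bound by the expansion accumulated over infinitely many expanding steps while remaining confined to the bounded set $X$, forcing them to coincide; and it yields surjectivity, since for a prescribed admissible sequence the nested cylinders $[j_0\cdots j_n]$ are balls whose radii shrink to $0$, so by completeness of $\bq_p$ their intersection is a single point of $J_f$ realizing that itinerary. Therefore $\pi$ is a bijection, an isometry by the previous paragraph, and satisfies $\pi\circ f=\sigma\circ\pi$, so that $(J_f,f,|\cdot|_p)$ is isometrically conjugate to $(\Sigma_A,\sigma,d_f)$.
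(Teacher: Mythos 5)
Your proof is correct and follows essentially the same route as the source: the paper states Theorem \ref{exit} without proof, citing \cite{FL2}, and your itinerary-map construction --- the telescoped scaling identity giving the isometry onto $(\Sigma_A,d_f)$, the out-degree-one argument for indifferent balls forcing divergence of the $\tau$-sums under irreducibility, and the nested-cylinder/completeness argument for surjectivity --- is precisely the argument of Fan--Liao--Wang--Zhou. Note only that you (correctly) read the scaling relation as $|f(x)-f(y)|_p=p^{\tau_j}|x-y|_p$, which fixes the sign typo in the paper's displayed formula \eqref{tau} and is the convention under which ``weak repeller'' ($\tau_j\geq 0$, one positive) and the metric $d_f$ are consistent.
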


\section{Description and existence of the fixed points}

In this section we consider a dynamical system corresponding to a
non-linear function $f_{a,b}:\mathbb{Q}_p\to \mathbb{Q}_p$ given by
\begin{equation}\label{case1}
f_{a,b}(u)=\left(\frac{(abu)^{2}+1}{b^{2}+a^{2}u^{2}}\right), \ \ a,b\in \ce_p.
\end{equation}

We stress that the mentioned function has recently been studied in
\cite{MDA} to find $p$-adic Gibbs measures. To investigate other
properties of the Gibbs measures it is needed to study dynamics of
$f_{a,b}$.

First, recall that two functions $f$ and $g$ are
\textit{topologically conjugate} if there exists a homeomorphism $h$
such that $h\circ f=g\circ h$. Moreover, if $h$ is isometry, then
the functions $f$ and $g$ are called \textit{isometrically
conjugate}. It is clear that the topological conjugacy is weak that
isometrical conjugacy, since the function $h$ is not necessary to be
an isometry in general.

One can prove the following fact.

\begin{prop}
The function $f_{a,b}$ is topologically  conjugate to
\begin{equation}\label{gg}
g_{a,b}(u)=a\left(\frac{b^2u^{2}+1}{b^{2}+u^{2}}\right).
\end{equation}
\end{prop}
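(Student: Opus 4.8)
The plan is to exhibit an explicit conjugating homeomorphism rather than to argue abstractly. Comparing the two maps, one notices that the numerator of $f_{a,b}$ carries the factor $a^2b^2u^2+1$ whereas that of $g_{a,b}$ carries $b^2u^2+1$, and the two differ precisely by rescaling the variable $u$ by the factor $a$. This suggests trying the linear map $h(u)=au$. The first thing I would verify is that $h$ is a homeomorphism of $\bq_p$. Since $a\in\ce_p$ we have $|a-1|_p<1$, so by the ultrametric inequality $|a|_p=|1+(a-1)|_p=1$; that is, $a$ is a $p$-adic unit. Consequently $h(u)=au$ is a bijection of $\bq_p$ with inverse $u\mapsto a^{-1}u$, and moreover $|h(u)-h(v)|_p=|a|_p|u-v|_p=|u-v|_p$, so $h$ is in fact an isometry. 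In particular it is the required homeomorphism, and one even obtains the stronger isometric conjugacy for free.

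Next I would verify the intertwining relation $h\circ f_{a,b}=g_{a,b}\circ h$ by a direct substitution. On one side $h(f_{a,b}(u))=a\cdot\frac{a^2b^2u^2+1}{b^2+a^2u^2}$, while feeding $h(u)=au$ into \eqref{gg} gives $g_{a,b}(au)=a\cdot\frac{b^2(au)^2+1}{b^2+(au)^2}=a\cdot\frac{a^2b^2u^2+1}{b^2+a^2u^2}$, which is the same expression. Thus the intertwining identity holds, and $h$ conjugates $f_{a,b}$ to $g_{a,b}$.

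The only genuine subtlety, and the step I would treat with care, concerns the domains: both maps are rational and are defined on $\bq_p$ only away from the zeros of their denominators $b^2+a^2u^2$ and $b^2+u^2$, which vanish precisely when $-1$ is a square in $\bq_p$; by Lemma \ref{sqr} this occurs if and only if $p\equiv 1\pmod 4$. Here I would observe that, because $b^2+(au)^2=b^2+a^2u^2$, the map $h(u)=au$ sends the pole set $\{u:b^2+a^2u^2=0\}$ of $f_{a,b}$ bijectively onto the pole set $\{v:b^2+v^2=0\}$ of $g_{a,b}$; hence $h$ restricts to a homeomorphism between the two natural domains of definition, and the intertwining identity of the previous paragraph holds throughout. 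Apart from this bookkeeping the proposition reduces to the one-line algebraic identity above, so the real content is locating the correct rescaling $h(u)=au$, after which nothing further is required.
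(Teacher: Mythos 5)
Your proof is correct: since $a\in\ce_p$ gives $|a|_p=1$, the map $h(u)=au$ is an isometric bijection of $\bq_p$ satisfying $h\circ f_{a,b}=g_{a,b}\circ h$ (both sides equal $a\,\frac{a^2b^2u^2+1}{b^2+a^2u^2}$), and your bookkeeping with the pole sets is sound. The paper omits the proof entirely (``One can prove the following fact''), but its remark immediately afterwards that the conjugation is in fact \emph{isometric} confirms that this explicit rescaling $h(u)=au$ is exactly the intended argument, so your approach coincides with the paper's.
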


Note that the conjugation in the last proposition is isometric
conjugacy.

 The function \eqref{gg} is called \textit{a generalized
Ising mapping}. In what follows, as we mentioned, we always assume
$p\geq 3$ without stressing it.

We will investigate the function $g_{a,b}.$ Some proofs of the
relations between the coefficients $a,b$ and the fixed points of
$g_{a,b}$ are given in \cite{MDA}. For the sake of completeness, we
are going to prove auxiliary facts.

\begin{prop}\label{parti2}
Let $p\geq 3$ and $a,b\in \ce_p$. Then the following statements are
true:
\begin{enumerate}
  \item[(i)] $g_{a,b}(\ce_p)\subset \ce_p$,
  \item[(ii)]$g_{a,b}$ is a contraction on $\ce_p$,
\end{enumerate}
\end{prop}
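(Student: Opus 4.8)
The plan is to reduce both statements to the elementary arithmetic of the multiplicative group $\ce_p$ recorded in Lemma \ref{MMK1}, after performing two algebraic simplifications.

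For part (i), I first note that since $a,b,u\in\ce_p$ and $\ce_p$ is a group under multiplication (property (1)), the elements $a$, $b^2$, $u^2$ and $b^2u^2$ all lie in $\ce_p$ as well. The key computation is then
$$
g_{a,b}(u)-a=a\left(\frac{b^2u^2+1}{b^2+u^2}-1\right)=a\,\frac{(b^2-1)(u^2-1)}{b^2+u^2},
$$
which rests on the identity $b^2u^2-b^2-u^2+1=(b^2-1)(u^2-1)$. By property (3) the denominator $b^2+u^2$ satisfies $|b^2+u^2|_p=1$, i.e. it is a unit, while $|b^2-1|_p<1$ and $|u^2-1|_p<1$ because $b^2,u^2\in\ce_p$. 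Hence $|g_{a,b}(u)-a|_p<1$, and combining this with $|a-1|_p<1$ through the ultrametric inequality yields $|g_{a,b}(u)-1|_p<1$, that is $g_{a,b}(u)\in\ce_p$ in the sense of \eqref{Exp}. Equivalently, one may observe directly that the bracketed fraction lies in $\ce_p$ and then multiply by $a$, invoking closure of the group.

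For part (ii), I would place the two fractions $g_{a,b}(u)$ and $g_{a,b}(w)$ over the common denominator $(b^2+u^2)(b^2+w^2)$. The decisive cancellation is that the resulting numerator factors completely, giving
$$
g_{a,b}(u)-g_{a,b}(w)=a\,\frac{(b^4-1)(u+w)}{(b^2+u^2)(b^2+w^2)}\,(u-w).
$$
Taking $p$-adic absolute values and using $|a|_p=1$ together with $|u+w|_p=|b^2+u^2|_p=|b^2+w^2|_p=1$ (property (3) again), as well as $|b^4-1|_p<1$ since $b^4\in\ce_p$, I obtain
$$
|g_{a,b}(u)-g_{a,b}(w)|_p=|b^4-1|_p\,|u-w|_p.
$$
As $|b^4-1|_p<1$, this exhibits $g_{a,b}$ as a contraction on $\ce_p$ with Lipschitz constant $|b^4-1|_p$, and in fact establishes the sharp equality rather than a mere inequality.

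The computations are routine once the two factorizations are spotted; the only point demanding genuine care is the repeated appeal to property (3) to certify that every denominator, and the factor $u+w$, has norm exactly one. This is precisely what keeps the norm multiplicative on the right-hand sides and prevents any unexpected loss of strict contraction, so I expect verifying these unit denominators to be the sole subtlety of the argument.
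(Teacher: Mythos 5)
Your proposal is correct and takes essentially the same approach as the paper: both parts rest on the same numerator factorizations together with Lemma \ref{MMK1}(3) to certify that $b^2+u^2$, $b^2+w^2$ and $u+w$ are units, and your identity $g_{a,b}(u)-g_{a,b}(w)=a\,(b^4-1)(u+w)(u-w)/\bigl((b^2+u^2)(b^2+w^2)\bigr)$ is exactly the computation in the paper's proof of (ii). The only cosmetic difference is in (i), where you compare $g_{a,b}(u)$ with $a$ and finish by the ultrametric inequality, whereas the paper estimates $|g_{a,b}(u)-1|_p$ directly by regrouping the numerator as $(ab^2-1)u^2+(a-1)-(b^2-1)$.
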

\begin{proof} (i) To prove this let us examine $| g_{a,b}(u)-1|_{p}\leq\frac{1}{p},$ for all $u\in \ce_p.$
Indeed, from $a,b\in\ce_{p}$, $|ab^{2}-1|_{p}<1$, $|b^{2}-1|_{p}<1$,
and $|2|_{p}=1$, one gets
\begin{eqnarray*}
\mid g_{a,b}(u)-1\mid_{p}&=&\frac{|ab^{2}u^{2}+a-b^{2}-u^{2}|_p}{|b^{2}+u^{2}|_p}\\
&=&\mid(ab^{2}-1)u^{2}+a-1-(b^{2}-1)\mid_{p}\\
&\leq&\frac{1}{p}.
\end{eqnarray*}
(ii) Now we show that the function $g_{a,b}$ is a contraction on
$\ce_{p}$. Indeed,
\begin{eqnarray*}
|g_{a,b}(x)-g_{a,b}(y)|_{p}&=&\frac{|(b^4-1) (x^2-y^2)|_p}{|(b^2+x^2)(b^2+y^2)|_p}\\
&=&\mid b^{4}-1\mid_{p}\mid x^{2}-y^{2}\mid_{p}\\
&\leq&\frac{1}{p}\mid x-y\mid_{p}
\end{eqnarray*}
\end{proof}

From this proposition due to the Banach Contraction Principle we
infer that there exists $x_{0}\in\ce_{p}$ such that
$g_{a,b}(x_{0})=x_{0}$.

Now we are going to describe all fixed points of $g_{a,b}.$

\begin{thm}\label{g-fix}
Let $a,b\in\ce_{p}$ with $b\neq 1$, and $g_{a,b}$ be given by
\eqref{gg}. Then the following statements hold:
\begin{enumerate}
\item[(i)] the function $g_{a,b}$ has a unique fixed point $x_{0}$ in
$\ce_{p}$;
\item[(ii)] if $p\equiv 3 (mod \ 4)$, then $x_0$ is the only fixed point of $g_{a,b}$. If $p\equiv 1 (mod \ 4)$, then  $g_{a,b}$ has at
two fixed points $x_{1},x_{2}$ different from $x_0$;

\item[(iii)] let $x_{1},x_{2}$ be two fixed points of $g_{a,b}$ different from
$x_0$. Then $x_{1},x_{2}\in\bq_p\setminus\ce_p$.
\end{enumerate}
\end{thm}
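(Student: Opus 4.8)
The plan is to recast the fixed-point condition $g_{a,b}(u)=u$ as a cubic and analyze it through its factorization. Clearing denominators in \eqref{gg}, the fixed points are exactly the roots in $\bq_p$ of
$$P(u)=u^{3}-ab^{2}u^{2}+b^{2}u-a.$$
Part (i) is already delivered by the contraction argument following Proposition \ref{parti2}: the Banach principle yields a unique $x_{0}\in\ce_{p}$ with $P(x_{0})=0$. First I would use this $x_0$ to perform the division $P(u)=(u-x_{0})Q(u)$, where $Q(u)=u^{2}+(x_{0}-ab^{2})u+a/x_{0}$; matching the linear coefficient merely reproduces the relation $P(x_0)=0$, so the factorization is consistent. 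Thus the remaining fixed points are precisely the roots of the quadratic $Q$, and everything reduces to deciding when $Q$ splits over $\bq_p$ and where its roots sit.

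Next I would compute the discriminant $D=(x_{0}-ab^{2})^{2}-4a/x_{0}$ and pin down its $p$-adic size. Since $\ce_p$ is a multiplicative group (Lemma \ref{MMK1}), both $ab^{2}$ and $a/x_{0}$ lie in $\ce_p$; by Lemma \ref{MMK1}(2) we get $|x_{0}-ab^{2}|_p<1$, hence $|(x_{0}-ab^{2})^{2}|_p<1$, while $|4a/x_{0}|_p=1$ because $|4|_p=1$ for $p\geq 3$ and $a/x_0$ is a unit. The strong triangle inequality then forces $|D|_p=1$ with leading residue $D\equiv -4a/x_{0}\equiv -4\pmod p$. Applying Lemma \ref{sqr}: here $ord_p(D)=0$ is even, so $\sqrt{D}$ exists in $\bq_p$ iff $x^{2}\equiv -4\pmod p$ is solvable, i.e. iff $-1$ is a quadratic residue mod $p$, which holds exactly when $p\equiv 1\pmod 4$. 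This settles (ii): for $p\equiv 3\pmod 4$ the quadratic $Q$ has no root in $\bq_p$ and $x_0$ is the only fixed point, whereas for $p\equiv 1\pmod 4$ the unit discriminant is a nonzero square, so $Q$ has two distinct roots $x_{1},x_{2}$.

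For part (iii) I would read off the roots $x_{1,2}=\frac{1}{2}\big((ab^{2}-x_{0})\pm\sqrt{D}\,\big)$ and compute their leading digits. Writing $s$ for the leading residue of $\sqrt{D}$, so that $s^{2}\equiv -4\pmod p$, and recalling $|ab^{2}-x_{0}|_p<1$ together with $|2|_p=1$, we obtain $x_{1,2}\equiv \pm s/2\pmod p$, where $(s/2)^{2}\equiv -1\pmod p$. Since $p\geq 3$ we have $-1\not\equiv 1$, so the leading residues $\pm s/2$ both differ from $1$; consequently $|x_{1,2}-1|_p=1$ and $x_{1,2}\notin\ce_p$, proving (iii). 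As a byproduct this also shows $x_{1},x_{2}\neq x_{0}$, since $x_{0}\in\ce_p$ has leading residue $1$, and the distinctness $x_1\neq x_2$ follows from $|D|_p=1\neq 0$.

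The main obstacle will be the bookkeeping in the second paragraph: one must argue cleanly that the square term $(x_{0}-ab^{2})^{2}$ is strictly dominated in norm by $4a/x_{0}$, so that $|D|_p=1$ and the leading residue is unambiguously $-4$, and then correctly translate the solvability of $x^{2}\equiv -4\pmod p$ into the congruence $p\equiv 1\pmod 4$ via the quadratic-residue character of $-1$. Everything else is routine once the size and leading digit of $D$ are under control.
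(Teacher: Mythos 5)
Your proposal is correct and follows essentially the same route as the paper: factor the cubic $u^{3}-ab^{2}u^{2}+b^{2}u-a$ through the contraction fixed point $x_{0}$, show the resulting quadratic $u^{2}+(x_{0}-ab^{2})u+a/x_{0}$ has unit discriminant congruent to $-4$ modulo $p$, invoke Lemma \ref{sqr} and the quadratic-residue character of $-1$ for (ii), and read off that the roots have leading residue squaring to $-1\not\equiv 1 \pmod p$ for (iii). The only differences are cosmetic simplifications: where the paper substitutes $a/x_{0}=x_{0}^{2}-ab^{2}x_{0}+b^{2}$ and expands $\triangle$ term by term to exhibit $\triangle=-4+p^{n}\delta$, you get $|\triangle|_{p}=1$ and $\triangle\equiv -4\pmod p$ directly from the ultrametric dominance $|(x_{0}-ab^{2})^{2}|_{p}<1=|4a/x_{0}|_{p}$, and your residue argument for (iii) is equivalent to the paper's computation $|(\sqrt{\triangle}-2)(\sqrt{\triangle}+2)|_{p}=|\triangle-4|_{p}=1$.
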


\begin{proof} (i) By Proposition \ref{parti2} we conclude that $g_{a,b}$ satisfies the Banach
Contraction principle on $\ce_p$. Therefore, $g_{a,b}$ has a unique
fixed point belonging to $\ce_p$, i.e. there exists $x_0\in\ce_p$
such that $g_{a,b}(x_0)=x_0.$

(ii) Consider the equation $x=g_{a,b}(x)$, which can be rewritten as
follows
\[
x^{3}-ab^{2}x^{2}+b^{2}x-a=0
\]
Note that, in general, we may solve the last equation by methods
developed in \cite{MOS}, but those methods give only information
about the existence of solutions. In reality, we need more
properties of the solutions (see further sections). Therefore, we
are going to find all the solutions.

Due to (i) we know that one of solutions of the last equation is
$x_{0}$. Therefore, one can write

\begin{equation}\label{canonicU}
x^{3}-ab^{2}x^{2}+b^{2}x-a=(x-x_{0})(x^{2}+(x_{0}-ab^{2})x+(x_{0}^2-ab^{2}x_0+b^{2})).
\end{equation}

The equality \eqref{canonicU} yields that
$$\frac{a}{x_{0}}=x_{0}^2-ab^{2}x_0+b^{2}.$$

Hence, the quadratic equation of RHS of \eqref{canonicU} can be
rewritten as follows
\begin{equation}\label{quadratic1}
x^{2}+(x_{0}-ab^{2})x+\frac{a}{x_{0}}=0.
\end{equation}
For us it is enough to solve \eqref{quadratic1}. Let us find its
discriminant
\begin{eqnarray*}
\triangle&=&x_{0}^{2}-2ab^{2}x_{0}+a^{2}b^{4}-4\frac{a}{x_{0}}\\
&=&x_{0}^{2}-2ab^{2}x_{0}+a^{2}b^{4}-4(x_{0}^{2}-ab^{2}x_{0}+b^{2})\\
&=&-3 x_0^2 +2 a b^2 x_0-4 b^2+a^2 b^4,
\end{eqnarray*}
here we have used the equality
$\frac{a}{x_{0}}=x_{0}^{2}-ab^{2}x_{0}+b^{2}$.



By using simple calculations, we find
\begin{eqnarray}\label{canonic 6}
\triangle&=&-4-4(x_0-1)+4(a-1)(b^2-1)+4(a-1)\\\nonumber
&+&2(x_0-1)(ab^2-1)-3(x_0-1)^2+(ab^2-1)^2.
\end{eqnarray}
Now, taking into account $a,b,x_0\in\ce_p$, one gets
\begin{eqnarray}\label{Delta1}
\triangle=-4+p^n\delta.
\end{eqnarray}
According to Lemma \ref{sqr} we conclude that $\sqrt{\triangle}$
exists if and only if $\sqrt{-4}$ exists, which is equivalent the
existence of $\sqrt{-1}$. Taking into account the fact that $\sqrt{-1}$ exists in $\mathbb{Q}_p$ if and only if $p\equiv 1 (mod \ 4)$.

(iii) Assume that \eqref{quadratic1} has two solutions
$x_{1},x_{2}$. So Viete's theorem implies that
$$\begin{cases}
x_{1}+x_{2}=ab^{2}-x_{0}\\
x_{1}\cdot x_{2}=\frac{a}{x_{0}}
\end{cases}$$
From $\mid x_{0}-ab^{2}\mid_{p}\leq\frac{1}{p}$ we get $\mid
x_{1}\mid_{p}\mid x_{2}\mid_{p}=1,\mid
x_{1}+x_{2}\mid_{p}\leq\frac{1}{p}.$ Hence, we obtain $\mid x_{1}\mid_{p}=1$, $\mid x_{2}\mid_{p}=1$.

Note that the solutions have the following form
\begin{eqnarray}\label{solx}
x_{1,2}=\frac{ab^{2}-x_{0}\pm\sqrt{\triangle}}{2}.
\end{eqnarray}

From \eqref{canonic 6} one can see that
\begin{eqnarray}\label{canonic61}
|(\sqrt{\triangle}-2)(\sqrt{\triangle}+2)|_{p}&=&
|\triangle-4|_{p}\nonumber\\[2mm]
&=&|-8+p^{-\gamma_{3}}\varepsilon_{3}|_{p}\\\nonumber
&=&1.\nonumber
\end{eqnarray}

Hence, one gets that
\begin{equation}\label{canonic 9}
|\sqrt{\triangle}-2|_{p}=1,\ |\sqrt{\triangle}+2|_{p}=1.
\end{equation}
Therefore, from \eqref{canonic 9} together with $|ab^{2}-1|_{p}<1$,
$|x_{0}-1|_{p}<1$ we have
\begin{eqnarray}\label{canonic 10}
|x_{1,2}-1|_{p}&=&|ab^{2}-x_{0}\pm\sqrt{\triangle}-2|_{p}\nonumber\\[2mm]
&=&|ab^{2}-1-(x_{0}-1)\pm\sqrt{\triangle}-2|_{p}\\
&=&1\nonumber
\end{eqnarray}
which means $x_{1,2}\notin\ce_{p}.$ This completes the proof.
\end{proof}


\begin{lem}\label{pr-g}
Let $a,b\in \ce_p$ with $b\neq 1$, and $g_{a,b}$ be given by
\eqref{gg}. Assume that $x_{0},x_{1},x_{2}$ are fixed points of
$g_{a,b}$. Then the following statements hold:
\begin{item}{\label{center}}
\begin{enumerate}
\item[(i)] ${\left|x_{0}-a\right|_{p}}<\left|b-1\right|_{p}$;
\item[(ii)]
$|b^{2}-1+(b^{2}a-x_{0})x_{1,2}|_p=\left|b-1\right|_{p}$;
\item[(iii)]
$\left|x^{2}_{1,2}+b^{2}\right|_{p}=\left|b-1\right|_{p}$;
\item[(iv)]
$\left|x^{2}_{0}+b^{2}\right|_{p}\left|x^{2}_{0}b^{2}+1\right|_{p}=1$;
\item[(v)] $\left|x^{2}_{1,2}+b^{2}\right|_{p}\left|x^{2}_{1,2}b^{2}+1\right|_{p}=|x^{2}_{1,2}+b^{2}|^2_{p}\leq\frac{1}{p^{2}}$.
\item[(vi)] $|x_0-1|_p<|b-1|_p$, if $|a-1|_p<|b-1|_p$;
\item[(vii)] if $|a-1|_p<|b-1|_p$, then
$$
\triangle=-4+p^{2m+l}\delta,
$$
where $b-1=p^{m}\varepsilon$, $l\in \mathbb{N}$ and $\delta\in \mathbb{Z}_p.$
\end{enumerate}
\end{item}
\end{lem}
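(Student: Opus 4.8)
The plan is to lean on two structural facts throughout. Since $a,b,x_0\in\ce_p$ we have $|a|_p=|b|_p=|x_0|_p=1$, and because $|b+1|_p=|x_0+1|_p=1$ (as $p\ge3$) the passage to squares is harmless: $|b^2-1|_p=|b-1|_p$ and $|x_0^2-1|_p=|x_0-1|_p$; moreover, by Lemma \ref{MMK1}(3) any sum of two elements of $\ce_p$, such as $b^2+x_0^2$ or $b^2x_0^2+1$, has norm $1$. Secondly, every fixed point $x$ obeys $x(b^2+x^2)=a(b^2x^2+1)$, and since $|x|_p=|a|_p=1$ this yields at once the relation $|b^2x^2+1|_p=|b^2+x^2|_p$ for $x\in\{x_0,x_1,x_2\}$. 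This single identity settles (iv) and (v) as soon as (iii) is available: for $x_0$ both factors are $1$, so the product is $1$; for $x_{1,2}$ both factors equal $|b-1|_p$ by (iii), so the product is $|b-1|_p^2\le p^{-2}$.

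For (i) I would rewrite the fixed-point equation as $x_0-a=a(b^2-1)(x_0^2-1)/(b^2+x_0^2)$ and take norms, obtaining $|x_0-a|_p=|b-1|_p\,|x_0-1|_p$; as $|x_0-1|_p<1$ this is strictly below $|b-1|_p$. Then (vi) drops out of $x_0-1=(x_0-a)+(a-1)$: under the hypothesis $|a-1|_p<|b-1|_p$ both summands have norm $<|b-1|_p$, so the ultrametric inequality gives $|x_0-1|_p<|b-1|_p$; in fact the same computation sharpens this to $|x_0-1|_p=|a-1|_p$, a refinement I will need for (vii).

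The crux is (iii), and (ii) follows from the same manipulations. Combining the quadratic \eqref{quadratic1} for $x_{1,2}$ with $a/x_0=x_0^2-ab^2x_0+b^2$ I would derive the factorization \[ x_{1,2}^2+b^2=(ab^2-x_0)(x_{1,2}+x_0). \] Splitting $ab^2-x_0=a(b^2-1)+(a-x_0)$ and invoking (i) gives $|ab^2-x_0|_p=|b-1|_p$, while Viete yields $(x_1+x_0)(x_2+x_0)=a(1+b^2x_0^2)/x_0$ of norm $1$, so each $x_{1,2}+x_0$ is a unit; hence $|x_{1,2}^2+b^2|_p=|b-1|_p$, proving (iii). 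In particular $|x_{1,2}^2+b^2|_p<1$ together with $b^2\equiv1\pmod p$ forces $x_{1,2}^2\equiv-1$, i.e. $x_{1,2}\equiv\pm\sqrt{-1}\pmod p$. For (ii) the same splitting gives \[ b^2-1+(ab^2-x_0)x_{1,2}=(b^2-1)(1+ax_{1,2})+(a-x_0)x_{1,2}; \] here $1+ax_{1,2}\equiv1\pm\sqrt{-1}\not\equiv0\pmod p$ is a unit, so the first term has norm $|b-1|_p$, while by (i) the second has norm $<|b-1|_p$, and the sum has norm exactly $|b-1|_p$.

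For (vii) I would expand $\triangle+4$ in the increments $\alpha=a-1$, $\xi=x_0-1$, $\beta=b^2-1$. Evaluating at $a=x_0=1$ shows the leading term is $\beta^2=(b^2-1)^2$, of order exactly $2m$, while every remaining monomial has order $\ge 2m+1$ once one knows $ord_p(\xi),ord_p(\alpha)\ge m+1$ from (vi). The one step demanding genuine care — the main obstacle — is the linear part $-4\xi+4\alpha=-4(x_0-a)$: taken separately $\xi$ and $\alpha$ may have order as small as $m+1$, which is below $2m$ as soon as $m\ge2$, so a term-by-term bound would destroy the claim. It is precisely the cancellation recorded in (i), namely $|x_0-a|_p=|b-1|_p\,|x_0-1|_p$ of order $\ge 2m+1$, that pushes this contribution strictly below $\beta^2$. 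Hence $ord_p(\triangle+4)=2m$, which is the asserted form $\triangle=-4+p^{2m+l}\delta$ with $\delta$ a unit. More broadly, the recurring difficulty in this lemma is the balancing of two terms of equal norm $|b-1|_p$; it is resolved each time by an exact algebraic identity together with the reduction $x_{1,2}\equiv\pm\sqrt{-1}\pmod p$ certifying that the surviving cofactor is a unit.
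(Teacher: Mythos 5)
Your proof is correct in all seven parts. Parts (i), (iv) and (v) coincide with the paper's own arguments (the paper proves (i) by the very same computation $|x_0-a|_p=|x_0-1|_p\,|b-1|_p$, and (v) by the same use of the fixed-point relation), but for (ii)--(iii) you take a genuinely different route. The paper proves (ii) first, certifying that $1+ax_{1,2}$ is a unit via the radical formula \eqref{solx} and the estimates $|\sqrt{\triangle}\pm 2|_p=1$ from \eqref{canonic 9}, and then deduces (iii) from (i) and (ii) after substituting $x_{1,2}^2=(ab^2-x_0)x_{1,2}-a/x_0$. You reverse the order: your factorization $x_{1,2}^2+b^2=(ab^2-x_0)(x_{1,2}+x_0)$ is exact (it follows from \eqref{quadratic1} together with $a/x_0=x_0^2-ab^2x_0+b^2$, as I checked), the splitting $ab^2-x_0=a(b^2-1)+(a-x_0)$ with (i) gives $|ab^2-x_0|_p=|b-1|_p$, and the Viete product $(x_1+x_0)(x_2+x_0)=a(1+b^2x_0^2)/x_0$ of norm one forces each factor $x_{1,2}+x_0$ to be a unit --- here you tacitly use $|x_{1,2}|_p=1$, which you should cite from Theorem \ref{g-fix}(iii). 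Then your (ii) uses the same decomposition $(b^2-1)(1+ax_{1,2})+(a-x_0)x_{1,2}$ as the paper, but certifies the unit $1+ax_{1,2}$ by reduction: (iii) forces $x_{1,2}^2\equiv -1 \pmod p$, so $1+ax_{1,2}\equiv 1\pm\sqrt{-1}\not\equiv 0\pmod p$ for $p\geq 3$. What your route buys is that (ii)--(iii) become purely algebraic consequences of Viete, never touching $\triangle$ or the quadratic formula; the paper's route instead recycles \eqref{canonic 9}, already available from Theorem \ref{g-fix}. Your (vi), via $x_0-1=(x_0-a)+(a-1)$ and (i), is also cleaner than the paper's term-by-term expansion of $|g_{a,b}(x_0)-1|_p$, and your sharpening $|x_0-1|_p=|a-1|_p$ is correct and genuinely needed in (vii).

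Concerning (vii), your computation is right and in fact sharper than the statement as printed: since $|ab^2-1|_p=|b-1|_p=p^{-m}$ (the paper notes this too), the term $(ab^2-1)^2$ in \eqref{canonic 6} has order exactly $2m$, while every other term has order at least $2m+1$; in particular the linear part $-4(x_0-1)+4(a-1)=-4(x_0-a)$ has order $\geq 2m+1$ by (i) and (vi), and you correctly flag this cancellation as the crux --- a naive term-by-term bound would indeed fail already for $m\geq 2$. Hence $ord_p(\triangle+4)=2m$ exactly, i.e.\ $\triangle=-4+p^{2m}\delta$ with $\delta$ a unit. Consequently, if the paper's ``$l\in\bn$'' is read as $l\geq 1$, the displayed form $-4+p^{2m+l}\delta$ is a misstatement (the correct value is $l=0$), and your version is the one actually supported by the paper's own ingredients. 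The discrepancy is harmless downstream, since Lemma \ref{cor2} uses only $|\triangle|_p=1$, which both versions deliver.
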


\begin{proof} (i) From Lemma \ref{MMK1} we find
\begin{eqnarray*}
\mid x_{0}-a\mid_{p}&=&\mid g(x_{0})-a\mid_{p}\\[2mm]
&=&\frac{|b^{2}x_{0}^{2}+1-b^{2}-x_{0}^{2}|_p}{|b^{2}+x_{0}^{2}|_{p}}\\[2mm]
&=&| x_{0}^{2}-1|_{p}| b-1|_{p}\\[2mm]
&=&| x_{0}-1|_{p}| b-1|_{p}
\end{eqnarray*}
Due to $|x_0-1|_p<1$ the last expression implies the desired
inequality.

(ii) We first observe that from \eqref{canonic 9} similarly to
\eqref{canonic 10} one can get $\left|x_{1,2}+1\right|_{p}=1$. This
together with the strong triangle inequality yields that
\begin{equation}\label{canonic 11}
\left|ax_{1,2}+1\right|_{p}=\left|1+x_{1,2}+(a-1)x_{1,2}\right|_{p}=1
\end{equation}
here we have used that $|a-1|_p<1$.

Now taking into account (i) and \eqref{canonic 11} we obtain
\begin{eqnarray*} |{b^{2}-1+(b^{2}a-x_{0})x_{1,2}}|_{p}&=&|{b^{2}-1+(b^{2}-1)ax_{1,2}-(x_{0}-a)x_{1,2}}|_{p}\\
&=&|{(b^{2}-1)(1+ax_{1,2}-(x_{0}-a)x_{1,2})}|_{p}\\
&=&| b^{2}-1|_{p}\\
&=&|b-1|_{p}.
\end{eqnarray*}

(iii)  Due to $x^{2}_{1,2}=(ab^{2}-x_{0})x_{1,2}-\frac{a}{x_{0}}$
and taking into account (i) and (ii) one finds
\begin{eqnarray*}
|b^{2}+x_{1,2}^{2}|_{p}&=&|
b^{2}+(ab^{2}-x_{0})x_{1,2}-\frac{a}{x_{0}}|_{p}\\[2mm]
&=&|
b^{2}-1+(ab^{2}-x_{0})x_{1,2}-\frac{1}{x_{0}}(a-x_{0})|_{p}\\[2mm]
&=&|b-1|_{p}.
\end{eqnarray*}

(iv) The proof of (iv) immediately follows from Lemma \ref{MMK1}.

(v) Now the fact that $x_{1,2}$ is a fixed point of $g$ together
with (iii) we obtain
\begin{eqnarray*}
\left|x_{1,2}^{2}+b^{2}\right|_p\left|x_{1,2}^{2}b^{2}+1\right|_p
&=&\left|x_{1,2}^{2}+b^{2}\right|^{2}_p\frac{|x_{1,2}^{2}b^{2}+1|_p}{|x_{1,2}^{2}+b^{2}|_p}\\[2mm]
&=&\left|x_{1,2}^{2}+b^{2}\right|^{2}_p\left|x_{1,2}\right|_p\\[2mm]
&=&\left|x_{1,2}^{2}+b^{2}\right|_{p}^{2}\\[2mm]
&\leq&\frac{1}{p^{2}}.
\end{eqnarray*}

(vi)
\begin{eqnarray*}
\left|x_{0}-1\right|_p&=&\left|g_{a,b}(x_{0})-1\right|_p\\
&=&\bigg|\frac{a(b^{2}x_0^{2}+1)}{b^{2}+x_0^{2}}-1\bigg|_p\\
&=&|a(b^{2}x_0^{2}+1)-(b^{2}+x_0^{2})|_p\\
&=&|a(b^{2}-1)(x_0^{2}-1)+(a-1)(x_0^2-1)+a(b^{2}-1)+2(a-1)+(1-b^2)|_p\\
&=&\left\{ \begin{array}{*{35}{l}}
   |a-1{{|}_{p}}, & \text{if}\ |a-1{{|}_{p}}\geq |b-1{{|}_{p}};  \\
   |b-1{{|}_{p}}{{p}^{-l}}, & \text{if}\ |a-1{{|}_{p}}<|b-1{{|}_{p}},\ l\geq 1.  \\
\end{array} \right.
\end{eqnarray*}

(vii) Using the proof Lemma \ref{pr-g} (i) and $|a-1|_p<|b-1|_p$ we
find $|ab^2-1|_p=|b-1|_p$. This together with \eqref{canonic 6} and
Lemma \ref{pr-g} (vi) implies the required assertion.

This completes the proof.
\end{proof}

\section{The classification of the fixed points of $p$-adic dynamical system}

In this section, we study behavior of the fixed points of function \eqref{gg}.
We will describe the behavior of the function \eqref{gg} with respect to the parameters $a,b\in \mathbb{Q}_p$
whether the fixed points are attracting, neutral or repelling.
\begin{thm}\label{theorem-fpt1}
Let $a,b\in\ce_{p}$ and $g_{a,b}$ has three  fixed points $x_{0},\,
x_{1},\, x_{2}$. Then the following statements hold:
\begin{enumerate}
\item[(i)] $x_{0}$ is an attracting fixed point;
\item[(ii)] $x_{1}$ and $x_{2}$ are repelling fixed points.
\end{enumerate}
\end{thm}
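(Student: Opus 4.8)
The plan is to classify each fixed point by computing the multiplier $\l = g_{a,b}'(x)$ at that point and evaluating its $p$-adic absolute value, using the definitions from section~2.2 (attracting if $|\l|_p < 1$, repelling if $|\l|_p > 1$). First I would differentiate \eqref{gg}. A direct computation gives
\begin{equation*}
g_{a,b}'(x) = a\cdot\frac{2b^2 x(b^2+x^2) - 2x(b^2 x^2+1)}{(b^2+x^2)^2} = \frac{2ax(b^4-1)}{(b^2+x^2)^2}.
\end{equation*}
This is the central formula, and both parts of the theorem reduce to estimating $|g_{a,b}'(x_i)|_p$ for $i=0,1,2$. Since $b\neq 1$ and $b\in\ce_p$, Lemma~\ref{MMK1} gives $|b^4-1|_p = |b-1|_p < 1$ and $|2a|_p = 1$, so the only delicate factor is $|x|_p/|b^2+x^2|_p^2$.

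For part~(i), the attracting fixed point $x_0$ lies in $\ce_p$, so $|x_0|_p = 1$, and by Lemma~\ref{pr-g}(iv) together with Lemma~\ref{MMK1}(3) one has $|b^2+x_0^2|_p = 1$ (indeed $b^2, x_0^2 \in \ce_p$ forces their sum to have norm $1$). Hence
\begin{equation*}
|g_{a,b}'(x_0)|_p = |2a|_p\,|x_0|_p\,\frac{|b^4-1|_p}{|b^2+x_0^2|_p^2} = |b-1|_p < 1,
\end{equation*}
which is exactly the condition $0 \le |\l|_p < 1$ defining an attracting point. This part is essentially immediate once the derivative formula is in hand.

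For part~(ii), the repelling points $x_1, x_2$ lie outside $\ce_p$. Here the key input is Lemma~\ref{pr-g}(iii), which states $|x_{1,2}^2+b^2|_p = |b-1|_p$, so the denominator $|b^2+x_{1,2}^2|_p^2 = |b-1|_p^2$ is \emph{small}. Combined with $|x_{1,2}|_p = 1$ (established in the proof of Theorem~\ref{g-fix}(iii)) and $|b^4-1|_p = |b-1|_p$, I would compute
\begin{equation*}
|g_{a,b}'(x_{1,2})|_p = |2a|_p\,|x_{1,2}|_p\,\frac{|b^4-1|_p}{|b^2+x_{1,2}^2|_p^2} = \frac{|b-1|_p}{|b-1|_p^2} = \frac{1}{|b-1|_p} > 1,
\end{equation*}
which gives $|\l|_p > 1$ and hence the repelling classification. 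The main obstacle is not any single estimate but rather ensuring the norm bookkeeping is airtight: the whole argument hinges on the contrast between $|b^2+x_0^2|_p = 1$ at the central fixed point and $|b^2+x_{1,2}^2|_p = |b-1|_p$ at the outer ones, so I would want to double-check that Lemma~\ref{pr-g}(iii) is being applied correctly and that the squaring of the denominator produces the reciprocal blow-up. One should also confirm that the repelling condition in the sense of $|g'|_p>1$ (indifferent/repelling trichotomy) indeed matches the earlier metric definition of a repeller via $|f(x)-x^{(0)}|_p > |x-x^{(0)}|_p$ on a neighborhood, which for analytic maps follows from the Taylor expansion since the leading term dominates.
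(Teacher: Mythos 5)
Your proof is correct and takes essentially the same route as the paper's: the identical derivative formula $g_{a,b}'(x)=2ax(b^4-1)/(b^2+x^2)^2$, the factorization $|b^4-1|_p=|b-1|_p$ via Lemma \ref{MMK1}, and Lemma \ref{pr-g}(iii) giving $|b^2+x_{1,2}^2|_p=|b-1|_p$, so that $|g_{a,b}'(x_0)|_p\leq \frac{1}{p}$ and $|g_{a,b}'(x_{1,2})|_p=\frac{1}{|b-1|_p}\geq p$ exactly as in the paper. Your closing remark reconciling the multiplier criterion $|\l|_p>1$ with the metric definition of a repeller is a sound point that the paper leaves implicit.
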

\begin{proof}
From
$$
\bigg| \frac{dg_{a,b}}{dx}(x_{0})\bigg|_{p}=\frac{|2a|_p\mid
x_{0}\mid_{p}\mid b^{4}-1\mid_{p}}{\mid
b^{2}+x_{0}^{2}\mid_{p}^{2}}=\mid b^{4}-1\mid_{p}\leq \frac{1}{p}
$$
We conclude that $x_{0}$ is attracting.

One can calculate that
$$\bigg| \frac{dg_{a,b}}{dx}(x_{1,2})\bigg|_{p}=\frac{|2a|_p| x_{1,2}|_{p}|b^{4}-1|_{p}}{| b^{2}+x_{1,2}^{2}|_{p}^{2}}=\frac{|b^{4}-1|_{p}}{| b^{2}+x_{1,2}^{2}|_{p}^{2}}=\frac{| b-1|_{p}}{| b^{2}+x_{1,2}^{2}|_{p}^{2}},
$$
here we have used $| b^{4}-1|_{p}=| b^{2}+1|_{p}|
b-1|_{p}|b+1|_{p}=| b-1|_{p}$. From Lemma \ref{pr-g} (iii), we find
\[
\bigg| \frac{dg_{a,b}}{dx}(x_{1,2})\bigg|_{p}=\frac{\mid
b-1\mid_{p}}{\mid b-1\mid_{p}^{2}}=\frac{1}{\mid b-1\mid_{p}}\geq p
\]
As a conclusion one gets that $x_{1}$ and $x_{2}$ are repelling fixed points.
\end{proof}

Now we are going to describe the basin of attraction
\[
A(x_{0})=\left\{ x\in \bq_{p}:\ g_{a,b}^n(x)\rightarrow
x_{0}\right\}
\]
of the fixed point $x_{0}$.

Now we define
\begin{eqnarray}\label{KR}
&& K=\{x\in S_1(x_0): |x^2+1|_p\leq |b^2-1|_p\},\\[2mm]
&& R=\bigcap_{n=0}g_{a,b}^{-1}(K).
\end{eqnarray}

Now, we are going to describe the size of the attractor of the
dynamic system.

\begin{thm}\label{theorem-fpt2}
Let $a,b\in \ce_p$. Then one has
$$
A(x_{0})=\bq_p\setminus R.
$$
Note that $R$ is nonempty if and only if $p\equiv 1 (mod \ 4)$.
\end{thm}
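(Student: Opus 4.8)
The plan is to characterize the basin of attraction $A(x_0)$ by showing that a point escapes toward $x_0$ precisely when it is not trapped inside the invariant set $R$. The overall strategy splits into two complementary inclusions: first establish that every point in $\bq_p\setminus R$ is attracted to $x_0$, and second show that no point of $R$ can be attracted to $x_0$. The structural guide is the decomposition of $\bq_p$ into $\ce_p$ (where $g_{a,b}$ is a contraction by Proposition \ref{parti2}) and its complement. Since Proposition \ref{parti2}(i) gives $g_{a,b}(\ce_p)\subset\ce_p$ and part (ii) gives contraction, any orbit that ever enters $\ce_p$ converges to $x_0$ by the Banach fixed point argument. Hence the whole question reduces to tracking which points in $\bq_p\setminus\ce_p$ eventually land in $\ce_p$ versus those whose entire forward orbit avoids it.

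First I would analyze $g_{a,b}$ on the sphere $S_1(x_0)$ and more generally on $\bq_p\setminus\ce_p$, computing $|g_{a,b}(u)-x_0|_p$ in terms of $|u-x_0|_p$ and the denominator $|b^2+u^2|_p$. The key local estimates are already supplied by Lemma \ref{pr-g}: parts (iii) and (v) control $|x_{1,2}^2+b^2|_p=|b-1|_p$, which is exactly what makes $x_1,x_2$ repelling in Theorem \ref{theorem-fpt1}(ii). The set $K$ is engineered so that points with $|x^2+1|_p\leq|b^2-1|_p$ are those lying near the ``bad'' fibers where the denominator $b^2+u^2$ is small, forcing $g_{a,b}$ to expand rather than contract. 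I would show that for $u\in S_1(x_0)\setminus K$ the image falls into $\ce_p$ (so such $u$ belongs to $A(x_0)$), whereas for $u\in K$ the dynamics may remain outside $\ce_p$. Then $R=\bigcap_{n\geq0}g_{a,b}^{-n}(K)$ collects exactly the points whose entire forward orbit stays in the expanding region, and by construction these never reach $\ce_p$, so they lie outside $A(x_0)$. Conversely any point not in $R$ has some iterate leaving $K$, which I would show lands in $\ce_p$ and is therefore attracted to $x_0$.

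The hard part will be proving the two inclusions cleanly, particularly showing $\bq_p\setminus R\subset A(x_0)$: one must verify that once an orbit exits $K$ it genuinely enters the contracting ball $\ce_p$ rather than wandering in some intermediate region, which requires a careful case analysis of $|g_{a,b}(u)-1|_p$ using the identities for the numerator and denominator norms. The repelling nature of $x_1,x_2$ from Theorem \ref{theorem-fpt1}(ii), together with Lemma \ref{pr-g}(v), is the engine that prevents points of $R$ from converging to $x_0$, since near $x_{1,2}$ the map strictly increases $p$-adic distance.

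Finally, for the nonemptiness claim I would invoke Theorem \ref{g-fix}(ii)--(iii): when $p\equiv1\,(\mathrm{mod}\ 4)$ the extra fixed points $x_1,x_2$ exist and lie in $\bq_p\setminus\ce_p$, and these repelling points (indeed the whole local repeller structure around them) must lie in $R$, making it nonempty. When $p\equiv3\,(\mathrm{mod}\ 4)$, Theorem \ref{g-fix}(ii) shows $x_0$ is the unique fixed point and $\sqrt{\triangle}$ does not exist, so the obstruction defining $K$ cannot be satisfied by any genuine orbit trapped away from $\ce_p$; I expect $R=\varnothing$ in this case, which one checks by showing $K$ itself either is empty or is swept into $\ce_p$ under one application of $g_{a,b}$.
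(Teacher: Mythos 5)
Your proposal is correct and follows essentially the same route as the paper: contraction on $\ce_p$ (Proposition \ref{parti2}), the key step that $x\notin K$ forces $g_{a,b}(x)\in\ce_p$ so any orbit leaving $K$ is captured by $A(x_0)$, the observation that an orbit in $R$ stays in $K$ and hence never enters the open neighborhood $\ce_p$ of $x_0$, and nonemptiness of $R$ via the fixed points $x_{1,2}\in K$ (by Lemma \ref{pr-g}(iii)) when $p\equiv 1\ (\mathrm{mod}\ 4)$, with $K=\varnothing$ when $p\equiv 3\ (\mathrm{mod}\ 4)$ since $|x^2+1|_p<1$ would force $-1$ to be a square mod $p$. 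Your appeal to the repelling nature of $x_1,x_2$ to prevent convergence is superfluous (though harmless), since disjointness of $K$ from $\ce_p$, via Lemma \ref{MMK1}(3), already settles step (III) exactly as in the paper.
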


\begin{proof} Let us break the proof into three steps.

(I). According to Proposition \ref{parti2} for any $x\in\ce_{p}$ we
infer that $ x\in A(x_{0})$ which means $\ce_{p}\subset A(x_{0})$.
We notice that $\ce_p=B_1(x_0).$

(II) In this step we establish that if $x\notin\bq_p\setminus R$,
then $x\in\ce_p$. To do this, let us first assume $x\notin K$, then
show $x\in\ce_p$. Indeed, from the assumption we infer that
$|x^2+1|_p>|b^2-1|_p$. Therefore, one gets
\begin{align*}
| g_{a,b}(x)-1|_{p} & =\frac{|(ab^{2}-1)x^{2}+a-b^{2}|_{p}}{|x^{2}+b^{2}|_{p}}\\
& =\frac{|(ab^{2}-1)(x^{2}+1)-(a+1)(b^{2}-1)|_{p}}{| x^{2}+1+b^{2}-1\mid_{p}}\\
 & \leq\frac{\max\left\{ |ab^{2}-1|_{p}|x^{2}+1|_{p},| b^{2}-1|_{p}\right\} }{| x^{2}+1|_{p}}\\
 & <\frac{|x^{2}+1|_{p}}{| x^{2}+1|_{p}}=1.
\end{align*}
Therefore,  $g_{a,b}(x)\in\ce_{p}$ which, due to case (I), implies
$x\in A(x_{0})$.

Now let $x\notin\bq_p\setminus R$. Then $g^n_{a,b}(x)\notin K$ for
some $n\geq 0$. Hence, $g^{n+1}_{a,b}(x)\in\ce_p\subset A(x_0)$.

(III) In this step, we show that if $x\in R$, then $x\notin A(x_0)$.
Indeed, from $x\in R$, one finds that $g^n_{a,b}(x)\in K$ for all
$n\geq 0$. This means that
\begin{eqnarray}\label{BB2}
\big|(g^{n}_{a,b}(x))^{2}+1\big|_{p}\leq |b^{2}-1|_{p}<1, \ \
\forall n\in\bn.
\end{eqnarray}
By Lemma \ref{MMK1} (3), we have $g^{n}_{a,b}(x)\notin\ce_p$ for all
$n\geq 0$. Since $\ce_p$ is an open neighborhood of $x_0$, the
iterates $g^{n}_{a,b}(x)$ do not converge to $x_0$ as $n\to\infty$.
Hence, $x\notin A(x_0)$.

This completes the proof.
\end{proof}

\section{Chaoticity of the $p$-adic dynamical systems}

In this section, we prove that the renormalized dynamical system
corresponding to the model is topologically conjugate to the
symbolic shift. We show that the function $g_{a,b}$ in \eqref{gg} is
$p$-adic transitive weak repeller. Therefore, we prove that $p$-adic
dynamical system associated to $g_{a,b}$  is topologically conjugate
to a subshift of finite type.

In what follows, we always assume that the dynamical system
$g_{a,b}$ has three fixed points $\{x_0,x_1,x_2\}$ (see Theorem
\ref{g-fix}). This, due to Theorem \ref{g-fix}, means that $p\equiv
1 (mod \ 4)$. For the sake of simplicity of calculation, we suppose
that $|a-1|_p<|b-1|_p$ is satisfied.

\begin{lem}\label{cor2}
Let $r=|b-1|_p$, then one has $B_r(x_1)\cap B_r(x_2)=\emptyset$.
\end{lem}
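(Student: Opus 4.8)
The plan is to reduce the disjointness of the two balls to a single norm estimate on $x_1-x_2$, and then to read off that estimate from the explicit description of the fixed points. Because the metric is non-Archimedean, two balls $B_r(x_1)$ and $B_r(x_2)$ of the same radius are either disjoint or identical: if some $z$ lay in both, the strong triangle inequality would give $|x_1-x_2|_p=|(x_1-z)+(z-x_2)|_p\leq\max\{|z-x_1|_p,|z-x_2|_p\}<r$. Hence it suffices to prove the reverse inequality $|x_1-x_2|_p\geq r=|b-1|_p$.

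First I would compute $x_1-x_2$ directly from the representation \eqref{solx}. Since $x_{1,2}=\frac{ab^{2}-x_{0}\pm\sqrt{\triangle}}{2}$, subtracting the two values cancels the common term $ab^{2}-x_{0}$ and leaves $x_1-x_2=\sqrt{\triangle}$ (recall that $\triangle$ is a genuine square in $\bq_p$ precisely in the case $p\equiv 1\,(\mathrm{mod}\ 4)$ treated here, by Theorem \ref{g-fix}). Thus $|x_1-x_2|_p=|\sqrt{\triangle}|_p$, and everything reduces to evaluating the $p$-adic absolute value of $\sqrt{\triangle}$.

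Next I would show $|\triangle|_p=1$. By \eqref{Delta1} (equivalently, by Lemma \ref{pr-g}(vii) under the running hypothesis $|a-1|_p<|b-1|_p$) one has $\triangle=-4+p^{n}\delta$ with $n\geq 1$ and $\delta\in\bz_p$. Since $p\geq 3$ forces $|-4|_p=1$, while $|p^{n}\delta|_p\leq p^{-n}<1$, the two summands have distinct norms, so the strong triangle inequality yields $|\triangle|_p=1$. Consequently $|\sqrt{\triangle}|_p^{2}=|\triangle|_p=1$, whence $|x_1-x_2|_p=|\sqrt{\triangle}|_p=1$.

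Finally, since $b\in\ce_p$ with $b\neq 1$ forces $r=|b-1|_p<1$, we conclude $|x_1-x_2|_p=1>r$, which by the first paragraph gives $B_r(x_1)\cap B_r(x_2)=\emptyset$. I do not expect a serious obstacle: the only point needing care is the bookkeeping guaranteeing that the two terms of $\triangle$ truly have different norms, so that $|\triangle|_p=1$ rather than something smaller. This is exactly what the shape $-4+p^{n}\delta$ secures once $p\geq 3$ is used, and it is the single place where the earlier analysis of the discriminant is indispensable.
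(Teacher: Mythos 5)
Your proof is correct and follows essentially the same route as the paper's: both compute $x_1-x_2=\sqrt{\triangle}$ from \eqref{solx}, deduce $|\triangle|_p=1$ (hence $|\sqrt{\triangle}|_p=1$) from the shape $\triangle=-4+p^{n}\delta$ with $p\geq 3$, and conclude $|x_1-x_2|_p=1>|b-1|_p=r$. Your opening reduction via the ultrametric dichotomy for equal-radius balls is just a slightly more explicit version of the paper's remark that it suffices to show $x_2\notin B_r(x_1)$.
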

\begin{proof} It is enough to show that $x_2\notin B_r(x_1)$. We know
that (see \eqref{solx})
$$
x_1=\frac{ab^{2}-x_0+\sqrt{\triangle}}{2}, \ \ x_2=\frac{ab^{2}-x_0-\sqrt{\triangle}}{2}.
$$
Then one gets
$$
|x_1-x_2|_p=\bigg|\frac{ab^{2}-x_0+\sqrt{\triangle}}{2}-\frac{ab^{2}-x_0-\sqrt{\triangle}}{2}\bigg|_p=|\sqrt{\triangle}|_p.
$$
Due to $\triangle=-4+p^{m+l}\varepsilon$ and $|\triangle|_p=1$, so $|\sqrt{\triangle}|_p=1$. Therefore we have $|x_1-x_2|_p=1>|b-1|_p=r$,
which means that $x_2\notin B_r(x_1)$.
\end{proof}

This lemma allows us to take the square root of $g_{a,b}$ by the
unique way on the balls $B_r(x_1)$, $B_r(x_2)$, respectively.
Therefore, one can prove the following result.

\begin{prop}\label{gg1-proposition}
The function $g_{a,b}$ given by \eqref{gg} is topologically
conjugate to
\begin{equation}\label{gg1}
k_{a,b}(x)=\left(\frac{a(b^2x+1)}{b^{2}+x}\right)^2.
\end{equation}
on the ball $B_{r}(x_1^2)$ (respectively, $B_{r}(x_2^2)$).  Here as
before $r=|b-1|_p$.
\end{prop}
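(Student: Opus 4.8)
The plan is to exhibit the squaring map $h(u)=u^{2}$ as the conjugacy. The starting observation is that both functions factor through the M\"obius-type map $\varphi(u)=a\frac{b^{2}u+1}{b^{2}+u}$: one checks directly that $g_{a,b}=\varphi\circ h$ and $k_{a,b}=h\circ\varphi$, since $g_{a,b}(u)=\varphi(u^{2})$ and $k_{a,b}(x)=\bigl(\varphi(x)\bigr)^{2}$. From this the intertwining relation is immediate,
\begin{equation}
h\circ g_{a,b}=h\circ\varphi\circ h=k_{a,b}\circ h,
\end{equation}
so $h$ conjugates $g_{a,b}$ to $k_{a,b}$ wherever the expressions are defined. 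Equivalently, $h(g_{a,b}(u))=(g_{a,b}(u))^{2}=(\varphi(u^{2}))^{2}=k_{a,b}(u^{2})=k_{a,b}(h(u))$; this is the routine part. Note also that $h(x_{1})=x_{1}^{2}$ is then a fixed point of $k_{a,b}$, consistent with the statement.

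The crux is to show that $h$ restricts to a homeomorphism from $B_{r}(x_{1})$ onto $B_{r}(x_{1}^{2})$. First I would prove that $h$ is actually an isometry on $B_{r}(x_{1})$. For $u,v\in B_{r}(x_{1})$ write $u+v-2x_{1}=(u-x_{1})+(v-x_{1})$; since $|u-x_{1}|_{p},|v-x_{1}|_{p}<r=|b-1|_{p}<1$ whereas $|2x_{1}|_{p}=|x_{1}|_{p}=1$ (recall $p\geq 3$, so $|2|_{p}=1$, and $|x_{1}|_{p}=1$ was established in the proof of Theorem \ref{g-fix}), the ultrametric inequality forces $|u+v|_{p}=|2x_{1}|_{p}=1$. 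Hence
\begin{equation}
|h(u)-h(v)|_{p}=|u^{2}-v^{2}|_{p}=|u+v|_{p}\,|u-v|_{p}=|u-v|_{p},
\end{equation}
so $h$ preserves distances on $B_{r}(x_{1})$, which is precisely the scaling property \eqref{tau} with $\tau=0$.

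From the isometry property the homeomorphism claim follows in the standard $p$-adic way. Injectivity is immediate, and for $u\in B_{r}(x_{1})$ one has $|h(u)-x_{1}^{2}|_{p}=|u-x_{1}|_{p}<r$, so $h(B_{r}(x_{1}))\subset B_{r}(x_{1}^{2})$. Surjectivity onto the whole ball I would obtain from Hensel's lemma: given $w\in B_{r}(x_{1}^{2})$, the equation $u^{2}=w$ has a root near $x_{1}$ because $|w-x_{1}^{2}|_{p}<r\leq\tfrac1p$ while the derivative $2x_{1}$ is a unit, and that root satisfies $|u-x_{1}|_{p}=|w-x_{1}^{2}|_{p}<r$, i.e.\ $u\in B_{r}(x_{1})$. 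Thus $h\colon B_{r}(x_{1})\to B_{r}(x_{1}^{2})$ is a bijective isometry, hence a homeomorphism; Lemma \ref{cor2} guarantees that $B_{r}(x_{1})$ and $B_{r}(x_{2})$ are disjoint, so the branch of the square root selected on $B_{r}(x_{1}^{2})$ is the unambiguous one landing in $B_{r}(x_{1})$. Combined with the intertwining relation, this shows that $g_{a,b}$ restricted to $B_{r}(x_{1})$ is isometrically, and in particular topologically, conjugate to $k_{a,b}$ on $B_{r}(x_{1}^{2})$; the argument for $x_{2}$ is identical.

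The step I expect to be the main obstacle is verifying that $h$ is genuinely a bijection \emph{onto the full ball} $B_{r}(x_{1}^{2})$, together with the uniformity of the isometry estimate over the ball. Both hinge on pinning down $|x_{1}|_{p}=1$ and $r=|b-1|_{p}<1$ so that $|u+v|_{p}=1$ holds for every pair in $B_{r}(x_{1})$ and so that Hensel's lemma applies with the correct contraction radius; once these are secured, the algebraic intertwining and the distance computation are direct.
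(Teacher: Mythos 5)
Your proof is correct and is essentially the paper's own argument: the paper uses the square-root branches $s_1(x)=-\sqrt{x}$, $s_2(x)=\sqrt{x}$ as the conjugating homeomorphisms satisfying $g_{a,b}\circ s_i=s_i\circ k_{a,b}$ on $B_r(x_i^2)$, which is exactly your conjugacy $h(u)=u^2$ written in the inverse direction. You merely supply the details the paper leaves implicit (the factorization $g_{a,b}=\varphi\circ h$, $k_{a,b}=h\circ\varphi$, the isometry estimate via $|u+v|_p=1$, and surjectivity via Hensel's lemma, all resting on $|x_{1,2}|_p=1$ and Lemma \ref{cor2}), so no further changes are needed.
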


\begin{proof} Let $s_1(x)=-\sqrt{x}$, $s_2(x)=\sqrt{x}$. Then using Lemma \ref{pr-g} one can prove that
$s_1$ (resp. $s_2$) homeomorphically maps $B_{r}(x_1^2)$ (resp.
$B_{r}(x_2^2)$) onto $B_{r}(x_1)$ (resp. $B_{r}(x_2)$). Moreover, we
have $g_{a,b}\circ s_i=s_i\circ k_{a,b}$ on $B_{r}(x_i^2)$, $i=1,2$.
\end{proof}

\begin{rem} Again by means of Lemma \ref{pr-g} we can establish that $K=\bar B_r(x_1)\cup \bar B_r(x_2)$, where
$\bar B_r(x_i)=\{x\in\bq_p: \ |x-x_i|_p\leq r\}$, $i=1,2$. Let
$K'=\{x\in\bq_p: \ |x^2+1|_p<1\}$. Under condition the $p\equiv 1
(mod \ 4)$, there exist $\a_1,\a_2\in\bq_p$ such that $\a_i^2=1$ for
$i=1,2$. Then, one has
$$
K=\bar B_r(\a_1)\cup \bar B_r(\a_2), \ \ K'=B_r(\a_1)\cup B_r(\a_2).
$$
Moreover, for $x,y\in B_r(\a_i)$, $i=1,2$, we have
$$
|g_{a,b}(x)-g_{a,b}(y)|_p=\frac{|x-y|_p}{r}.
$$
Hence, $g_{a,b}$ is a $p$-adic weak repeller on $K'$, but it does
not appear to be locally Lipschitz on $K\setminus K'$. Moreover,
$g_{a,b}$ on $K'$ does not yield an irreducible incidence matrix, as
described in section 2.3. Therefore, we need to work with $k_{a,b}$
instead of $g_{a,b}$ to avoid these issues. Moreover, $k_{a,b}$ is
not isometrically conjugate to $g_{a,b}$ on $K$. Indeed, one can see
that  $s(x)=x^2$ is invertible on $K$. Moreover, $s$ is an isometry
when restricted to either $\bar B_{r}(x_1)$ or $\bar B_{r}(x_2)$.
However, $s$ is not isometry on $K$ as a whole. For example, one has
\begin{eqnarray*}
|s(x_1)-s(x_2)|_p=|x_1^2-x_2^2|_p=|ab^2-x_0|_p|x_1-x_2|_p<|x_1-x_2|_p.
\end{eqnarray*}
Hence, $g_{a,b}$ is not isometrically conjugate to $k_{a,b}$ on $K$
or $B_r(x_1)\cup B_r(x_2)$. Therefore, in what follows we require
only topological conjugacy.
\end{rem}

\begin{cor}\label{gg1-proposition-cor}
One has $Fix(k_{a,b})=\{x_0^2,x_1^2,x_2^2\}$.
\end{cor}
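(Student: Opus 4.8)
The plan is to exploit the elementary algebraic relation between $g_{a,b}$ and $k_{a,b}$. Writing $h(x)=\frac{a(b^2x+1)}{b^2+x}$, one checks directly from \eqref{gg} and \eqref{gg1} that $g_{a,b}(u)=h(u^2)$ and $k_{a,b}(x)=h(x)^2$. Hence, setting $s(u)=u^2$, we obtain the semiconjugacy
$$
(s\circ g_{a,b})(u)=g_{a,b}(u)^2=h(u^2)^2=k_{a,b}(u^2)=(k_{a,b}\circ s)(u).
$$
First I would use this to get the inclusion $\{x_0^2,x_1^2,x_2^2\}\subseteq Fix(k_{a,b})$: if $g_{a,b}(x_i)=x_i$, then $k_{a,b}(x_i^2)=s(g_{a,b}(x_i))=s(x_i)=x_i^2$. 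This step uses only Theorem \ref{g-fix}, which furnishes the three fixed points of $g_{a,b}$.

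For the reverse inclusion I would pass to a degree count. Clearing denominators in $k_{a,b}(x)=x$ yields the polynomial identity $a^2(b^2x+1)^2=x(b^2+x)^2$, which is a cubic in $x$ and therefore has at most three roots in $\bq_p$. Consequently it suffices to prove that $x_0^2$, $x_1^2$, $x_2^2$ are pairwise distinct, for then these three roots exhaust $Fix(k_{a,b})$.

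The main obstacle is precisely this distinctness, since squaring can identify $x_i$ with $-x_j$. To separate $x_0^2$ from $x_{1,2}^2$ I would note that $x_0^2=x_{1,2}^2$ forces $x_0=\pm x_{1,2}$; the case $x_0=x_{1,2}$ is excluded because the fixed points are distinct, while $x_0=-x_{1,2}$ would give $|x_0-1|_p=|x_{1,2}+1|_p=1$ (the latter norm being established in the proof of Lemma \ref{pr-g}(ii)), contradicting $x_0\in\ce_p$. To separate $x_1^2$ from $x_2^2$, observe that $x_1^2=x_2^2$ together with $x_1\neq x_2$ forces $x_1+x_2=0$, whence the form of the roots in \eqref{solx} gives $ab^2=x_0$. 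Substituting $x_0=ab^2$ into the fixed-point equation $g_{a,b}(x_0)=x_0$ collapses, after simplification, to $b^4=1$; since $b\in\ce_p$ and, for $p\geq 3$, the only fourth root of unity reducing to $1$ modulo $p$ is $1$ itself, this forces $b=1$, contradicting the standing hypothesis $b\neq 1$. Thus the three squares are pairwise distinct, and combined with the cubic degree bound this yields $Fix(k_{a,b})=\{x_0^2,x_1^2,x_2^2\}$.
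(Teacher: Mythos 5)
Your proof is correct, and it takes a genuinely different --- and in fact more complete --- route than the paper. The paper states this corollary without proof, as an immediate consequence of Proposition \ref{gg1-proposition}: the local conjugacies $g_{a,b}\circ s_i=s_i\circ k_{a,b}$ via the square-root branches $s_i$ on $B_r(x_i^2)$ transport fixed points between $g_{a,b}$ and $k_{a,b}$. But that argument is local: it identifies $x_1^2,x_2^2$ as fixed points of $k_{a,b}$ inside the two balls, says nothing directly about $x_0^2$, and does not by itself exclude additional fixed points of $k_{a,b}$ elsewhere in $\mathbb{Q}_p$. Your global semiconjugacy $s\circ g_{a,b}=k_{a,b}\circ s$ with $s(u)=u^2$, combined with the observation that $k_{a,b}(x)=x$ reduces to a monic cubic (note the excluded point $x=-b^2$ is not a root of that cubic, since $b^4\neq1$, so no root is extraneous), closes exactly this gap. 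Your distinctness step is also a genuine addition rather than a restatement: the paper's Remark after Proposition \ref{gg1-proposition} computes $|x_1^2-x_2^2|_p=|ab^2-x_0|_p\,|x_1-x_2|_p$ and tacitly assumes $ab^2\neq x_0$, which is precisely what your argument (that $x_0=ab^2$ forces $b^4=1$, hence $b=1$, contradicting $b\neq1$) justifies; for the last implication one can argue even more simply that $b^4-1=(b-1)(b+1)(b^2+1)$ with $|b+1|_p=|b^2+1|_p=1$ for $b\in\mathcal{E}_p$, $p\geq3$, so $b^4=1$ gives $|b-1|_p=0$. Likewise, the separation $x_0^2\neq x_{1,2}^2$ can be shortened: $x_0^2\in\mathcal{E}_p$ by Lemma \ref{MMK1}(1), whereas $|x_{1,2}^2-1|_p=|x_{1,2}-1|_p\,|x_{1,2}+1|_p=1$ by \eqref{canonic 10} and the estimate $|x_{1,2}+1|_p=1$ from the proof of Lemma \ref{pr-g}(ii), so $x_{1,2}^2\notin\mathcal{E}_p$; but your $\pm$ case analysis is equally valid. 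In short, the paper's implicit proof buys brevity by leaning on the conjugacy framework, while yours buys rigor: it proves the set equality (not just the inclusion) and repairs the unstated nondegeneracy assumption.
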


\begin{lem}\label{lemma5-2} Let $r=|b-1|_p$. Then
$$
|k_{a,b}(x)-k_{a,b}(y)|_p=\frac{|x-y|_p}{|b-1|_p^2}, \ \ \textit{for
any}\ x,y\in B_r(x_1^2)
$$
and
$$
|k_{a,b}(x)-k_{a,b}(y)|_p=\frac{|x-y|_p}{|b-1|_p^2}, \ \ \textit{for
any}\ x,y\in B_r(x_2^2).
$$

\end{lem}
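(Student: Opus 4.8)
The plan is to reduce the claim to a single algebraic identity followed by a careful count of $p$-orders. Setting $h(x)=\frac{a(b^{2}x+1)}{b^{2}+x}$ so that $k_{a,b}(x)=h(x)^{2}$, I would write
\[
k_{a,b}(x)-k_{a,b}(y)=\bigl(h(x)-h(y)\bigr)\bigl(h(x)+h(y)\bigr)
\]
and compute the difference factor outright. A short calculation collapses its numerator to $(b^{4}-1)(x-y)$, so that
\[
h(x)-h(y)=\frac{a\,(b^{4}-1)(x-y)}{(b^{2}+x)(b^{2}+y)} .
\]
The lemma is thereby reduced to evaluating the $p$-adic norms of $|a|_p$, $|b^{4}-1|_p$, the product $|b^{2}+x|_p\,|b^{2}+y|_p$, and $|h(x)+h(y)|_p$ on $B_r(x_1^{2})$; the argument on $B_r(x_2^{2})$ is word for word the same.

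Three of these are routine. Since $a\in\ce_p$ one has $|a|_p=1$; factoring $b^{4}-1=(b-1)(b+1)(b^{2}+1)$ and noting $|b+1|_p=|b^{2}+1|_p=1$ (as $|2|_p=1$ and $b\in\ce_p$) gives $|b^{4}-1|_p=|b-1|_p=r$; and Lemma \ref{pr-g}(iii) combined with the isosceles property of the ultrametric forces $|b^{2}+x|_p=|b^{2}+x_1^{2}|_p=r$ for every $x\in B_r(x_1^{2})$, and likewise $|b^{2}+y|_p=r$. Hence the difference factor already contributes
\[
|h(x)-h(y)|_p=\frac{r\,|x-y|_p}{r^{2}}=\frac{|x-y|_p}{r}.
\]

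The heart of the matter, and the step I expect to be the main obstacle, is the exact evaluation of $|h(x)+h(y)|_p$: to land on the asserted denominator $|b-1|_p^{2}$ this factor must contribute precisely $|b-1|_p^{-1}$. To analyze it I would use that $x_1^{2}$ is a fixed point of $k_{a,b}$ (Corollary \ref{gg1-proposition-cor}) and that $h(x_1^{2})=g_{a,b}(x_1)=x_1$, so that $h(x)$ and $h(y)$ both sit close to $x_1$. Writing the sum as a single fraction,
\[
h(x)+h(y)=\frac{a\bigl[(b^{4}+1)(x+y)+2b^{2}(xy+1)\bigr]}{(b^{2}+x)(b^{2}+y)},
\]
the denominator again has norm $r^{2}$, so the whole power count rests on the $p$-order of the bracketed numerator $N$. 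Reaching the stated identity requires showing $|N|_p=r$, i.e. that $N$ has $p$-order exactly one step below that of the denominator; I would attempt this by substituting $x=x_1^{2}+\varepsilon$, $y=x_1^{2}+\eta$ with $|\varepsilon|_p,|\eta|_p<r$ and tracking the leading contributions through $|x_1^{2}+1|_p$ and $|x_1^{2}+b^{2}|_p=r$ supplied by Lemma \ref{pr-g}. This order computation is the delicate point on which the entire exponent depends; once $|h(x)+h(y)|_p=r^{-1}$ is secured, multiplying the two factors gives $|k_{a,b}(x)-k_{a,b}(y)|_p=|x-y|_p/|b-1|_p^{2}$, and the identical computation on $B_r(x_2^{2})$ finishes the proof.
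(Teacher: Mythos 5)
Your factorization $k_{a,b}(x)-k_{a,b}(y)=\bigl(h(x)-h(y)\bigr)\bigl(h(x)+h(y)\bigr)$ is exactly the decomposition underlying the paper's proof: after clearing denominators, the paper's displayed numerator $(b^4-1)(x-y)\bigl((b^4+1)(x+y)+2b^2(1+xy)\bigr)$ is precisely the product of your two factors, and your evaluation $|h(x)-h(y)|_p=|x-y|_p/r$ agrees with it. The genuine gap is the step you defer: ``once $|h(x)+h(y)|_p=r^{-1}$ is secured.'' It cannot be secured, because it is false. Since $|b^2+x|_p=|b^2+x_1^2|_p=r$ on all of $B_r(x_1^2)$ (Lemma \ref{pr-g}(iii) and the ultrametric), your own formula for $h(x)-h(y)$ shows $|h(x)-x_1|_p=|x-x_1^2|_p/r<1$, so $|h(x)|_p=|h(y)|_p=1$ and the ultrametric inequality caps $|h(x)+h(y)|_p\le 1<r^{-1}$; in fact $h(x)+h(y)=2x_1+(\text{terms of norm}<1)$, whence $|h(x)+h(y)|_p=1$ exactly. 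Equivalently, for $N=(b^4+1)(x+y)+2b^2(1+xy)$ one has the identity $N=2b^2(1+x)(1+y)+(b^2-1)^2(x+y)$, and since $x_1^2+1=(ab^2-x_0)x_1+(x_0-a)/x_0$ gives $|x_1^2+1|_p=|b-1|_p$ under the section's standing assumption $|a-1|_p<|b-1|_p$, both summands have norm $|b-1|_p^2$; a leading-term check (the sum is $-2(b^2+1)(b^2-1)^2$ up to smaller terms) shows no further cancellation, so $|N|_p=|b-1|_p^2$, not $|b-1|_p$.

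The obstacle you flagged is therefore real, and it is in fact an error in the paper itself: the ``simple, but long calculations'' asserted there claim $|N|_p=|b-1|_p$, which the above rules out, and the correct conclusion is $|k_{a,b}(x)-k_{a,b}(y)|_p=|x-y|_p/|b-1|_p$, with exponent $1$ rather than $2$. This is corroborated three ways: $|k_{a,b}'(x_1^2)|_p=|2x_1|_p\,|b^4-1|_p/|b^2+x_1^2|_p^2=1/|b-1|_p$; Theorem \ref{theorem-fpt1} gives $|g_{a,b}'(x_{1,2})|_p=1/|b-1|_p$, and the Remark's own observation that $s(x)=x^2$ is an isometry on each ball $\bar B_r(x_i)$ forces $k_{a,b}$ on $B_r(x_i^2)$ to expand at exactly the rate of $g_{a,b}$ on $B_r(x_i)$; and a direct numerical check (e.g.\ $p=5$, $a=1$, $b=6$, $x=x_1^2$, $y=x_1^2+25$) yields scaling $p$, not $p^2$. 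The corrected statement still makes $(X,J_{k_{a,b}},k_{a,b})$ a $p$-adic repeller, so Theorem \ref{theorem-con} is unaffected; but as written, neither your proposed final step nor the paper's proof of Lemma \ref{lemma5-2} can be completed.
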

\begin{proof} Let $x,y\in B_r(x_1^2)$. Then one gets $x=x_1^2+\gamma_1$ and $y=x_1^2+\gamma_2$, where
$|\gamma_1|_p<r$ and $|\gamma_2|_p<r$.

From $x+b^2=x_1^2+b^2+\gamma_1$ we have $y+b^2=x_1^2+b^2+\gamma_2$.
Due to Lemma \ref{pr-g} (iii) one finds $|x_1^2+b^2|_p=|b-1|_p=r$
and $|x+y|_p=|2x_1^2+\gamma_1+\gamma_2|_p=1.$ Simple, but long
calculations imply that
\begin{eqnarray*}
|(b^4 +1)(x+y)+2b^2 (1+x y)|_p=|b-1|_p.\\
\end{eqnarray*}
Therefore, we obtain
\begin{eqnarray*}
|k_{a,b}(x)-k_{a,b}(y)|_{p}&=&\frac{|\left(b^4-1\right) (x-y)\left((b^4 +1)(x+y)+2b^2 (1+x y)\right)|_p}{|\left(b^2+x\right)^2 \left(b^2+y\right)^2|_p}\\
&=&\frac{|(b^4-1) (x-y)|_p|b-1|_{p}}{|\left(b^2+x\right)^2 \left(b^2+y\right)^2|_p}\\
&=&\frac{|x-y|_{p}}{|b-1|_{p}^2}.
\end{eqnarray*}
Similarly, for any $x,y\in B_r(x_2^2)$ one can show that
$$|k_{a,b}(x)-k_{a,b}(y)|_p=\frac{|x-y|_p}{|b-1|_{p}^2}.
$$
This completes the proof.
\end{proof}

\begin{lem}\label{lemma-3}
Let $X=B_r(x_1^2)\cup B_r(x_2^2),$ here as before $r=|b-1|_p$. Then
$k_{a,b}^{-1}(X)\subset X.$
\end{lem}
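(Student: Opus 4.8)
The plan is to use that, by Corollary~\ref{gg1-proposition-cor}, $x_1^2$ and $x_2^2$ are fixed points of $k_{a,b}$ sitting at the centres of the two balls making up $X$, and to combine this with the uniform expansion supplied by Lemma~\ref{lemma5-2}. First I would pin down the mutual position of the two centres. By Lemma~\ref{cor2} one has $|x_1-x_2|_p=1$, and Vieta's relations for \eqref{quadratic1} give $x_1+x_2=ab^2-x_0$; since $|ab^2-a|_p=|b^2-1|_p=|b-1|_p=r$ while $|x_0-a|_p<r$ by Lemma~\ref{pr-g}(i), the ultrametric inequality forces $|x_1+x_2|_p=r$. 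Hence
\[
|x_1^2-x_2^2|_p=|x_1-x_2|_p\,|x_1+x_2|_p=r,
\]
so the balls $B_r(x_1^2)$ and $B_r(x_2^2)$ are disjoint and $X$ is a disjoint union of two balls.

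Next I would upgrade the expansion estimate of Lemma~\ref{lemma5-2} into a statement about images of balls. Since $k_{a,b}(x_i^2)=x_i^2$ and $|k_{a,b}(x)-k_{a,b}(y)|_p=|x-y|_p/r^2$ on $B_r(x_i^2)$, the restriction of $k_{a,b}$ to $B_r(x_i^2)$ is an isometry up to the constant factor $r^{-2}$, hence a bijection of $B_r(x_i^2)$ onto $B_{1/r}(x_i^2)$. Moreover, the displayed identity shows that every $w\in B_r(x_1^2)$ satisfies $|w-x_2^2|_p=r<1/r$, that is $B_r(x_1^2)\subset B_{1/r}(x_2^2)$, and symmetrically $B_r(x_2^2)\subset B_{1/r}(x_1^2)$.

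The conclusion would then follow from a degree count. Clearing denominators, $k_{a,b}(x)=w$ reads $a^2(b^2x+1)^2=w(b^2+x)^2$, so $k_{a,b}$ is a rational map of degree $2$ and each value has at most two preimages counted with multiplicity. Fix $w\in B_r(x_1^2)$. On the one hand $w\in B_{1/r}(x_1^2)=k_{a,b}(B_r(x_1^2))$ yields a preimage in $B_r(x_1^2)$; on the other hand $w\in B_{1/r}(x_2^2)=k_{a,b}(B_r(x_2^2))$ yields a preimage in $B_r(x_2^2)$. These two preimages are distinct because $B_r(x_1^2)\cap B_r(x_2^2)=\emptyset$, so they already exhaust $k_{a,b}^{-1}(w)$; thus $k_{a,b}^{-1}(w)\subset X$. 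Interchanging the roles of $x_1^2$ and $x_2^2$ treats $w\in B_r(x_2^2)$, and therefore $k_{a,b}^{-1}(X)\subset X$.

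The step I expect to carry the real weight is the surjectivity $k_{a,b}(B_r(x_i^2))=B_{1/r}(x_i^2)$: the inclusion of the image into this ball is immediate from Lemma~\ref{lemma5-2}, but the equality (needed to guarantee a preimage of $w$ inside \emph{each} ball) rests on the standard fact that an analytic map which rescales distances by a fixed constant sends a ball bijectively onto the corresponding ball, proved by successive approximation. Once this and the degree-$2$ bookkeeping are in place, no preimage can escape $X$; the distance computations of the first paragraph are routine given Lemma~\ref{pr-g} and Lemma~\ref{cor2}.
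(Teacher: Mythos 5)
Your proof is correct, but it takes a genuinely different route from the paper. The paper argues constructively: it exhibits the two explicit inverse branches
$k_{1}^{-1}(x)=\bigl(a-b^{2}\sqrt{x}\bigr)/\bigl(\sqrt{x}-ab^{2}\bigr)$ and
$k_{2}^{-1}(x)=-\bigl(a+b^{2}\sqrt{x}\bigr)/\bigl(\sqrt{x}+ab^{2}\bigr)$,
and by direct norm estimates (writing $\sqrt{x}=x_{1}+p^{m/2}\gamma$, computing $|\sqrt{x}-ab^{2}|_{p}=1$ and $|a-b^{2}\sqrt{x}-x_{1}^{2}(\sqrt{x}-ab^{2})|_{p}<|b-1|_{p}$) shows $k_{1}^{-1}(X)\subset B_{r}(x_{1}^{2})$ and $k_{2}^{-1}(X)\subset B_{r}(x_{2}^{2})$; the fact that these two branches exhaust all preimages is left implicit (it is the same degree-two observation you make explicit). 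You instead work forward: fixed points at the centres (Corollary \ref{gg1-proposition-cor}) plus the uniform scaling of Lemma \ref{lemma5-2} give $k_{a,b}(B_{r}(x_{i}^{2}))=B_{1/r}(x_{i}^{2})$, the distance computation $|x_{1}^{2}-x_{2}^{2}|_{p}=r$ (which matches the estimate in the paper's Remark after Proposition \ref{gg1-proposition}, and whose ingredients $|x_{1}-x_{2}|_{p}=1$ and $|ab^{2}-x_{0}|_{p}=|b-1|_{p}$ are correctly sourced from Lemma \ref{cor2} and Lemma \ref{pr-g}(i)) places each ball inside both image balls, and the degree-two count then pins every preimage inside $X$. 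What your approach buys: it simultaneously proves Lemma \ref{corallary-5}, which the paper states without proof, so the transitivity (full incidence matrix) in Theorem \ref{theorem-con} comes for free; and your surjectivity step, which you rightly flag as the load-bearing one, is exactly the standard fact the paper itself invokes in Section 2.3 when it identifies $\{j: B_{r}(a_{j})\cap f(B_{r}(a_{i}))\neq\varnothing\}$ with $\{j: B_{r}(a_{j})\subset f(B_{r}(a_{i}))\}$ (in $\bq_{p}$ it holds even for bare scaled isometries of closed balls by compactness and residue counting, and passes to open balls since the value group is discrete). What the paper's route buys is explicitness: no appeal to a ball-mapping theorem, and concrete formulas for the branches. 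Two small points you should patch for completeness: the quadratic $a^{2}(b^{2}x+1)^{2}=w(b^{2}+x)^{2}$ could a priori degenerate (leading coefficient $a^{2}b^{4}-w$), but since $|a^{2}b^{4}-x_{i}^{2}|_{p}=1>r$ one has $a^{2}b^{4}\notin X$, so for $w\in X$ it is genuinely of degree two (alternatively, your two distinct roots already force exact degree two); and the pole $x=-b^{2}$ satisfies $|x_{i}^{2}+b^{2}|_{p}=r$ by Lemma \ref{pr-g}(iii), so $k_{a,b}$ is analytic on each ball, as your surjectivity argument requires.
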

\begin{proof}
We show that $k_{a,b}$ has two inverse branches on the set $X$,
which are
$$
k_{1}^{-1}(x)=\left(\frac{a-b^2\sqrt{x}}{\sqrt{x}-ab^2}\right),\ \ \ k_{2}^{-1}(x)=-\left(\frac{a+b^2\sqrt{x}}{\sqrt{x}+ab^2}\right).
$$
First, let us show that for any $x\in X$, $k_{1}^{-1}(x)\in
B_{r}(x_1^2).$ Indeed, we have
\begin{eqnarray}\label{lemma-3-eq1}
|k_{1}^{-1}(x)-x_1^2|_{p}&=&\bigg|\frac{a-b^2\sqrt{x}}{\sqrt{x}-a b^2}-x_1^2\bigg|_p
=\frac{|a-b^2\sqrt{x}-x_1^2(\sqrt{x}-a b^2)|_p}{|\sqrt{x}-a b^2|_p}.
\end{eqnarray}
Now, let us compute the numerator and denominator of eq. \eqref{lemma-3-eq1}.
$$
|\sqrt{x}-a b^2|_p=|x_1+p^{m/2}\gamma-a b^2|_p=|(x_1-1)+p^{m/2}\gamma+(1-a b^2)|_p=1,
$$
\begin{eqnarray*}
|a-b^2\sqrt{x}-x_1^2(\sqrt{x}-a b^2)|_p&=&|(a-1)-(b^2-1)\sqrt{x}+(1-a b^2)-(x_1^2+1)(\sqrt{x}-a b^2)|_p\\
&<&|b-1|_p.
\end{eqnarray*}
Therefore, one gets $|k_{1}^{-1}(x)-x_1^2|_{p}<|b-1|_p$ which
implies that $k_{1}^{-1}(x)\in B_{r}(x_1^2).$

Similarly, one can show that $k_{2}^{-1}(x)\in B_{r}(x_2^2).$
Consequently, we conclude that $k_{a,b}^{-1}(X)\subset X.$
\end{proof}

\begin{lem}\label{corallary-5}
One has $B_{r}(x_i^2)\subset k_{a,b}(B_{r}(x_j^2))$, $i,j\in
\{1,2\}$.
\end{lem}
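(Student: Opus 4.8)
The plan is to read this off directly from the inverse-branch construction already established in Lemma \ref{lemma-3}. Recall the two explicit branches exhibited there,
$$
k_1^{-1}(w)=\frac{a-b^2\sqrt{w}}{\sqrt{w}-ab^2}, \qquad k_2^{-1}(w)=-\frac{a+b^2\sqrt{w}}{\sqrt{w}+ab^2},
$$
together with the fact that $k_j^{-1}$ maps $X=B_r(x_1^2)\cup B_r(x_2^2)$ into $B_r(x_j^2)$. The one additional observation I would record is that these maps are genuine right inverses of $k_{a,b}$, i.e. $k_{a,b}\!\left(k_j^{-1}(w)\right)=w$ for every $w\in X$. This is the algebraic fact that solving $k_{a,b}(t)=w$, equivalently $\tfrac{a(b^2t+1)}{b^2+t}=\pm\sqrt{w}$, returns exactly $t=k_1^{-1}(w)$ for the $+$ sign and $t=k_2^{-1}(w)$ for the $-$ sign; it is a routine rearrangement of a linear equation in $t$. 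The only point needing care is that $\sqrt{w}$ is well defined on $X$, which holds because every $w\in X$ lies in $\ce_p$, so its $p$-adic square root exists by Lemma \ref{MMK1}(5).

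Granting this, the lemma is immediate and this is the argument I would present. Fix $i,j\in\{1,2\}$ and take an arbitrary $w\in B_r(x_i^2)\subset X$. By Lemma \ref{lemma-3} the point $t:=k_j^{-1}(w)$ belongs to $B_r(x_j^2)$, and by the right-inverse identity $k_{a,b}(t)=w$. Hence $w\in k_{a,b}\!\left(B_r(x_j^2)\right)$, and since $w$ was arbitrary we conclude $B_r(x_i^2)\subset k_{a,b}\!\left(B_r(x_j^2)\right)$. Thus there is essentially no obstacle here: the statement is a corollary of Lemma \ref{lemma-3}, and the only substantive check is the composition identity $k_{a,b}\circ k_j^{-1}=\mathrm{id}$, which I would verify once by direct computation.

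A conceptually cleaner but more technical alternative avoids the explicit branches and uses the dilation estimate instead. By Corollary \ref{gg1-proposition-cor} the point $x_j^2$ is fixed by $k_{a,b}$, and Lemma \ref{lemma5-2} gives $|k_{a,b}(x)-x_j^2|_p=|x-x_j^2|_p/r^2$ on $B_r(x_j^2)$, so $k_{a,b}$ dilates this ball by the factor $1/r^2$. One then argues, exactly as in the parenthetical remark of Section~2.3, that an expanding map of this form carries $B_r(x_j^2)$ onto the larger ball $B_{1/r}(x_j^2)$; since $x_i^2,x_j^2\in \bar B_r(1)$ forces $|x_i^2-x_j^2|_p\leq r<1/r$, one obtains $B_r(x_i^2)\subset B_{1/r}(x_j^2)=k_{a,b}\!\left(B_r(x_j^2)\right)$. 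The genuine difficulty in this second route is justifying surjectivity onto $B_{1/r}(x_j^2)$, which needs a completeness/contraction argument for the inverse map, i.e. precisely the work done by hand in Lemma \ref{lemma-3}; for that reason the first argument is preferable. Either way, the payoff is that the incidence matrix of the system is the all-ones $2\times 2$ matrix, hence irreducible, which is the transitivity hypothesis feeding into Theorem \ref{exit}.
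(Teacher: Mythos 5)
Your main deduction is correct and is, in substance, the paper's intended justification: the paper states this lemma with no proof at all, immediately after Lemma \ref{lemma-3}, precisely because it is read off from the two inverse branches constructed there. Since $k_1^{-1}$ and $k_2^{-1}$ map all of $X=B_r(x_1^2)\cup B_r(x_2^2)$ into $B_r(x_1^2)$ and $B_r(x_2^2)$ respectively, and since solving $\frac{a(b^2t+1)}{b^2+t}=\pm\sqrt{w}$ for $t$ shows they are genuine right inverses of $k_{a,b}$, every $w\in B_r(x_i^2)\subset X$ has the preimage $k_j^{-1}(w)\in B_r(x_j^2)$ --- exactly your argument. Your alternative route via the dilation estimate of Lemma \ref{lemma5-2} is also workable, but, as you yourself observe, the surjectivity step it needs is precisely the inverse-branch computation of Lemma \ref{lemma-3}, so the first argument is the right one to present.

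There is, however, one genuinely false step in your justification: it is \emph{not} true that every $w\in X$ lies in $\ce_p$, so Lemma \ref{MMK1}(5) cannot be invoked for the existence of $\sqrt{w}$. Indeed, by Lemma \ref{pr-g}(iii) one has $|x_{1,2}^2+b^2|_p=|b-1|_p$, and since $|b^2-1|_p=|b-1|_p=r$ this gives $|x_{1,2}^2+1|_p\leq r$; hence for every $w\in X$ the ultrametric inequality yields $|w+1|_p\leq r<1$, and therefore $|w-1|_p=|(w+1)-2|_p=|2|_p=1$, i.e. $X\cap\ce_p=\emptyset$ (the points of $X$ sit near $-1$, not near $1$ --- as they must, since $\ce_p$ lies in the basin of attraction of $x_0$ by Theorem \ref{theorem-fpt2}, which would contradict the repeller structure). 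The existence of $\sqrt{w}$ on $X$ instead rests on the standing assumption of Section 5 that $p\equiv 1\ (\mathrm{mod}\ 4)$: since $|w|_p=1$ and the leading digit of $w$ is congruent to $-1$ modulo $p$, which is a quadratic residue exactly in this case, Lemma \ref{sqr} applies; equivalently, $-w\in\ce_p$, so $\sqrt{-w}$ exists by Lemma \ref{MMK1}(5) and $\sqrt{w}=\sqrt{-1}\,\sqrt{-w}$. With this one-line repair (and noting that the choice of branch of $\sqrt{x}$ on $X$ is already implicit in the proof of Lemma \ref{lemma-3}, where $\sqrt{x}$ is written as $x_1+p^{m/2}\gamma$), your proof is complete.
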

Now, we can prove main result of this section.
\begin{thm}\label{theorem-con}
Let $r=|b-1|_p$, $X=B_r(x_1^2)\cup B_r(x_2^2)$ and
$k_{a,b}:X\rightarrow \mathbb{Q}_{p}$ be a function given by
\eqref{gg1}. Then the dynamics $(J_{k_{a,b}}, k_{a,b}, |.|_p)$ is
isometrically conjugate to the shift dynamics $(\Sigma, \sigma,
d_{k_{a,b}})$.
\end{thm}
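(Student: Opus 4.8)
The plan is to verify that the triple $(X,J_{k_{a,b}},k_{a,b})$ satisfies the hypotheses of Theorem \ref{exit}, i.e. that it is a transitive $p$-adic weak repeller with some incidence matrix $A$, and then read the conclusion off directly. Taking $I=\{1,2\}$ with centers $x_1^2,x_2^2$ and common radius $r=|b-1|_p$, I need four things: that $X=B_r(x_1^2)\cup B_r(x_2^2)$ is a \emph{disjoint} union of these balls; that $k_{a,b}^{-1}(X)\subset X$; that on each ball $k_{a,b}$ rescales distances by a single factor exceeding $1$ (so the $\tau_j$ of \eqref{tau} are equal and the expanding/repeller hypothesis holds); and that the resulting incidence matrix is irreducible.

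For disjointness I would compute $|x_1^2-x_2^2|_p$. Writing $x_1^2-x_2^2=(x_1-x_2)(x_1+x_2)$ and using the Viete relation $x_1+x_2=ab^2-x_0$ from Theorem \ref{g-fix}(iii), together with $x_1-x_2=\sqrt{\triangle}$ from \eqref{solx} and $|\sqrt{\triangle}|_p=1$ from Lemma \ref{cor2}, gives $|x_1^2-x_2^2|_p=|ab^2-x_0|_p$. Since $|ab^2-1|_p=|b-1|_p$ and $|x_0-1|_p<|b-1|_p$ (Lemma \ref{pr-g}(vi) and the computation in the proof of its part (vii)), the ultrametric inequality yields $|ab^2-x_0|_p=|b-1|_p=r$. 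Hence the two open balls of radius $r$ about $x_1^2$ and $x_2^2$ are disjoint, which gives the ball decomposition required in Section 2.3.

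The remaining ingredients are already prepared. Lemma \ref{lemma-3} gives $k_{a,b}^{-1}(X)\subset X$, so $J_{k_{a,b}}=\bigcap_{n\geq 0}k_{a,b}^{-n}(X)$ is well defined and forward invariant. Lemma \ref{lemma5-2} shows that on each of $B_r(x_1^2)$ and $B_r(x_2^2)$ one has $|k_{a,b}(x)-k_{a,b}(y)|_p=|x-y|_p/|b-1|_p^2$; since $|b-1|_p^{-2}>1$ is the same factor on both pieces, relation \eqref{tau} holds on each ball with one and the same value of $\tau_j$, and that value corresponds to genuine expansion, so $(X,J_{k_{a,b}},k_{a,b})$ is a $p$-adic repeller, in particular a weak repeller in the sense of Section 2.3. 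For transitivity I would invoke Lemma \ref{corallary-5}: since $B_r(x_i^2)\subset k_{a,b}(B_r(x_j^2))$ for all $i,j\in\{1,2\}$, each image $k_{a,b}(B_r(x_j^2))$ contains both balls, so $I_j=\{1,2\}$ and the incidence matrix is the all-ones $2\times 2$ matrix, which is trivially irreducible and whose subshift $\Sigma_A$ is the full shift $\Sigma$ on two symbols. Applying Theorem \ref{exit} then yields the isometric conjugacy of $(J_{k_{a,b}},k_{a,b},|\cdot|_p)$ with $(\Sigma,\sigma,d_{k_{a,b}})$, as claimed; note also that $Fix(k_{a,b})=\{x_0^2,x_1^2,x_2^2\}$ from Corollary \ref{gg1-proposition-cor} confirms the two expanding fixed points $x_1^2,x_2^2$ lie one in each ball.

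I expect the genuinely substantive steps to be the two supporting lemmas rather than the assembly above: verifying the exact constant expansion factor $|b-1|_p^{-2}$ in Lemma \ref{lemma5-2} and the surjectivity statement $B_r(x_i^2)\subset k_{a,b}(B_r(x_j^2))$ in Lemma \ref{corallary-5}, both resting on the delicate norm estimates of Lemma \ref{pr-g}. Once those are in hand, the proof of Theorem \ref{theorem-con} is essentially a matter of checking that the definitions of Section 2.3 apply and quoting Theorem \ref{exit}; the one point needing care is to confirm that the single common rescaling factor is strictly greater than $1$, so that the expanding (repeller) hypothesis, and not merely an isometry, is the one in force.
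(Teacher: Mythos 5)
Your proposal is correct and follows essentially the same route as the paper's own proof: you assemble Lemma \ref{lemma-3} (the invariance $k_{a,b}^{-1}(X)\subset X$), Lemma \ref{lemma5-2} (the constant rescaling factor $|b-1|_p^{-2}>1$ on each ball, giving the repeller property), and Lemma \ref{corallary-5} (the all-ones incidence matrix, hence transitivity and the full shift on two symbols), and then quote Theorem \ref{exit}. Your additional verification that $|x_1^2-x_2^2|_p=|ab^2-x_0|_p=|b-1|_p=r$, so that the two open balls forming $X$ are genuinely disjoint as required by the setup of Section 2.3, is a point the paper leaves implicit, and your computation of it via Viete's relations and $|\sqrt{\triangle}|_p=1$ is sound.
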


\begin{proof} It is enough to check that all conditions of Theorem \ref{exit} are satisfied. According to Lemma \ref{lemma-3}, one gets $k^{-1}_{a,b} (X)\subset X.$
By Lemma \ref{lemma5-2}, the triple $(J_{k_{a,b}},k_{a,b}, |.|_p)$ is a $p$-adic repeller.
Finally, Lemma \ref{corallary-5}, an incidence matrix $A$ has the following form
$$A=\left(
       \begin{array}{cc}
       1 & 1 \\
        1 & 1 \\
       \end{array}
     \right).
$$
Therefore, the triple $(X,k_{a,b}, \mid.\mid _p)$ is a transitive.
So, Theorem \ref{exit} implies that $p$-adic nonlinear rational
dynamical system $(J_{k_{a,b}},k_{a,b}, \mid .\mid_p)$ is
isometrically conjugate to full shift dynamical system
$(\Sigma_{A},\sigma, d_k)$. This completes the proof.
\end{proof}

It is well-known that the shift map is chaotic, and hence, from
Theorem \ref{theorem-con}, we can infer that the function \eqref{gg}
is chaotic as well. 

\begin{rem} A main aim of the present paper is to establish of the chaos for the generalized $p$-adic Ising mapping in $\bq_p$, which
is stated in Theorem \ref{theorem-con}. On the other hand, we notice
that the results in \cite{FL2} can be extended for finite extensions
of $\bq_p$. Therefore, further development of the results in this
paper can be generalized as well for finite extensions, which will
be a topic our next works.
\end{rem}

\section{An application: Gibbs measures for the $p$-adic Ising-Vannimenus model}

In this section, we study the existence of periodic $p$-adic Gibbs
measures of $p$-adic Ising-Vannimenus model given in \cite{MDA}. In
\cite{MSK1} it has been studied 2-periodic $p$-adic Gibbs measures
for the Ising-Vannimenus model on the Cayley tree of order two. In
this section, we show that there are many kinds of periodic $p$-adic
Gibbs measures (for definitions we refer to the appendix).

Let us consider a $H_m$-periodic function $\h=\{\h_x\}_{x\in
V\setminus\{x^0\}}$. From the $H_m$-periodicity we infer that
there is a $m$-collection of vectors  $\{\h_0,\dots,\h_{m-1}\}$,
such that $\h_x=\h_i$, if $d(x,x^0)\equiv i(\textrm{mod}\ m)$,
$i=0,\dots,m-1$. On the invariant line, we have
$\h_i=(h_i,1,\dots,1)$ $(i=0,\dots,m-1)$.

Then the equation \eqref{canonic3} for the $H_m$-periodic functions
reduces to the following system
\begin{equation}\label{eq12}
h_i=g_{a,b}(h_{i+1}), \ \ h_m=g_{a,b}(h_{0}), \ i=1,\dots,m-1.
\end{equation}
It is clear that the equation \eqref{eq12} is equivalent to
finding $m$-periodic points of the function $g_{a,b}$. Hence, the
existence of periodic orbits of the function implies the existence
of $H_m$-periodic $p$-adic quasi Gibbs measures.

It is well-known that the shift operator has infinitely many
periodic points, therefore, Theorem  \ref{theorem-con} implies that
the function $k_{a,b}$ also has infinitely many periodic points.
Also, because $g_{a,b}$ is topologically conjugate to $k_{a,b}$, the
function $g_{a,b}$ also has infinitely many periodic points. Hence,
there are infinitely many $H_m$-periodic $p$-adic quasi Gibbs
measures for the $p$-adic Ising-Vannimenus model.

\section*{Acknowledgments} The authors would like to thank the referees for
their useful suggestions which allowed to improve the content of the
paper.

\section{Appendix}
\subsection{$p$-adic measure}

Let $(X,\cb)$ be a measurable space, where $\cb$ is an algebra of
subsets $X$. A function $\m:\cb\to \bq_p$ is said to be a {\it
$p$-adic measure} if for any $A_1,\dots,A_n\in\cb$ such that
$A_i\cap A_j=\emptyset$ ($i\neq j$) the equality holds
$$
\mu\bigg(\bigcup_{j=1}^{n} A_j\bigg)=\sum_{j=1}^{n}\mu(A_j).
$$

A $p$-adic measure is called a {\it probability measure} if
$\mu(X)=1$.  One of the important conditions is boundedness, namely
a $p$-adic probability measure $\m$ is called {\it bounded} if
$\sup\{|\m(A)|_p : A\in \cb\}<\infty $. For more detailed
information about $p$-adic measures we refer to
\cite{AKh},\cite{K3},\cite{KhN}.

\subsection{Cayley tree}

Let $\Gamma^k_+ = (V,L)$ be a semi-infinite Cayley tree of order
$k\geq 1$ with the root $x^0$ (where each vertex has exactly $k+1$
edges, except for the root $x^0$, which has $k$ edges). Here $V$ is
the set of vertices and $L$ is the set of edges. The vertices $x$
and $y$ are called {\it nearest neighbors} and they are denoted by
$l=<x,y>$ if there exists an edge connecting them. A collection of
the pairs $<x,x_1>,\dots,<x_{d-1},y>$ is called a {\it path} from
the point $x$ to the point $y$. The distance $d(x,y), x,y\in V$, on
the Cayley tree, is the length of the shortest path from $x$ to $y$.

Recall a coordinate structure in $\G^k_+$:  every vertex $x$
(except for $x^0$) of $\G^k_+$ has coordinates $(i_1,\dots,i_n)$,
here $i_m\in\{1,\dots,k\}$, $1\leq m\leq n$ and for the vertex
$x^0$ we put $(0)$.  Namely, the symbol $(0)$ constitutes level 0,
and the sites $(i_1,\dots,i_n)$ form level $n$ ( i.e.
$d(x^0,x)=n$) of the lattice.


For $x\in \G^k_+$, $x=(i_1,\dots,i_n)$ put
\begin{equation}\label{S(x)}
 S(x)=\{(x,i):\ 1\leq
i\leq k\},
\end{equation}
here $(x,i)$ is short for $(i_1,\dots,i_n,i)$. This set is called
a set of {\it direct successors} of $x$.

Let us define on $\G^k_+$ a binary operation
$\circ:\G^k_+\times\G^k_+\to\G^k_+$ as follows: for any two
elements $x=(i_1,\dots,i_n)$ and $y=(j_1,\dots,j_m)$ put
\begin{equation}\label{binar1}
x\circ
y=(i_1,\dots,i_n)\circ(j_1,\dots,j_m)=(i_1,\dots,i_n,j_1,\dots,j_m)
\end{equation}
and
\begin{equation}\label{binar2}
x\circ x^0=x^0\circ x= (i_1,\dots,i_n)\circ(0)=(i_1,\dots,i_n).
\end{equation}

By means of the defined operation $\G^k_+$ becomes a
noncommutative semigroup with a unit. Using this semigroup
structure one defines translations $\tau_g:\G^k_+\to \G^k_+$,
$g\in \G^k_+$ by
\begin{equation}\label{trans1}
\tau_g(x)=g\circ x.
\end{equation}
It is clear that $\tau_{(0)}=id$.

Let $H\subset \G^k_+$ be a sub-semigroup of $\G^k_+$ and
$h:\G^k_+\to Y$ be a $Y$-valued function defined on $\G^k_+$. We say
that $h$ is {\it $H$-periodic} if $h(\tau_g(x))=h(x)$ for all $g\in
H$ and $x\in \G^k_+$. Any $\G^k_+$-periodic function is called {\it
translation invariant}. For each $m\geq 2$ we put
\begin{equation}\label{sub}
H_m=\{x\in \G^k_+: \ d(x,x^0)\equiv 0 (\textrm{mod}\ m) \}.
\end{equation}
One can check that $H_m$ is a sub-semigroup.

Let us set
$$ W_n=\{x\in V| d(x,x^0)=n\}, \ \ \
V_n=\bigcup_{m=1}^n W_m, \ \ L_n=\{l=<x,y>\in L | x,y\in V_n\}.
$$

Two vertices $x,y\in V^0$ are called the \textit{next-nearest
neighbors} if $d(x,y)=2$. The next-nearest-neighbors vertices $x$
and $y$ are called the \textit{prolonged next-nearest neighbors}
if $x\in W_{n-2}$ and $y\in W_n$ for some $n\geq 1$, which are
denoted by $>x,y<$. The next-nearest-neighbor vertices $x$ and $y$
are called \textit{one-level next-nearest-neighbors} if $x,y\in
W_n$ for some $n$ and they are denoted by $>\overline{x,y}<$.

\subsection{$p$-adic Ising-Vannimenus (IV) model and its $p$-adic Gibbs measures}

In this subsection we consider the $p$-adic Ising-Vannimenus model
given in \cite{MDA} where spin takes values in the set
$\Phi=\{-1,+1\}$, ($\Phi$ is called a {\it state space}) and is
assigned to the vertices of the tree $\G^k_+=(V,L)$. A configuration
$\s$ on $V$ is then defined as a function $x\in V\to\s(x)\in\Phi$;
in a similar manner one defines configurations $\s_n$ and $\w$ on
$V_n$ and $W_n$, respectively. The set of all configurations on $V$
(resp. $V_n$, $W_n$) coincides with $\Omega=\Phi^{V}$ (resp.
$\Omega_{V_n}=\Phi^{V_n},\ \ \Omega_{W_n}=\Phi^{W_n}$). One can see
that $\Om_{V_n}=\Om_{V_{n-1}}\times\Om_{W_n}$. Using this, for given
configurations $\s_{n-1}\in\Om_{V_{n-1}}$ and $\w\in\Om_{W_{n}}$ we
define their concatenations  by
$$
(\s_{n-1}\vee\w)(x)= \left\{
\begin{array}{ll}
\s_{n-1}(x), \ \ \textrm{if} \ \  x\in V_{n-1},\\
\w(x), \ \ \ \ \ \ \textrm{if} \ \ x\in W_n.\\
\end{array}
\right.
$$
It is clear that $\s_{n-1}\vee\w\in \Om_{V_n}$.

The Hamiltonian $H_n:\Om_{V_n}\to\bq_p$ of the $p$-adic
Ising-Vannimenus model has a form

\begin{equation}\label{ham1}
H_{n}(\sigma)=J\sum\limits_{< x,y>\in
L_{n}}\sigma(x)\sigma(y)+J_{1}\sum\limits_{>x,y<: x,y\in
V_{n}}\sigma(x)\sigma(y)+J_{0}\sum\limits_{>\overline{x,y}<: x,y\in
V_{n}}\sigma(x)\sigma(y)
\end{equation}
where $J_{1},J,J_0\in B(0,p^{-1/(p-1)})$ are coupling constants.

Note that the last condition together with the strong triangle
inequality implies the existence of $\exp_p(H_n(\s))$ for all
$\s\in\Om_{V_n}$, $n\in\bn$. This is required to our construction.


Assume that
$\h:{V\setminus\{x^{(0)}\}}\times{V\setminus\{x^{(0)}\}}\to\bq_p^{\Phi\times\Phi}$
be a mapping i.e. $\{x,y\}\to \h_{xy}$, where
$\h_{xy}=(h_{xy,--},h_{xy,-+},h_{xy,+-},h_{xy,++})$,
$h_{xy,\pm\pm}\in\bq_p$ and $x,y\in V\setminus\{x^{(0)}\}$.

Given $n\in\bn$, let us consider a $p$-adic probability measure
$\m^{(n)}_\h$ on $\Om_{V_n}$ defined by

\begin{equation}\label{measure1}
\m_\h^{(n)}(\sigma)=\frac{1}{Z_{n}^{(\h)}}{\exp_{p}}{\left(H_{n}(\sigma)\right)}{\underset{x\in
W_{n-1},y\in
S(x)}{\prod}\left(h_{xy,\sigma(x)\sigma(y)}\right)^{\sigma(x)\sigma(y)}}
\end{equation}
Here, $\s\in\Om_{V_n}$, and $Z_n^{(\h)}$ is the corresponding
normalizing factor called a {\it partition function} given by

\begin{equation}\label{partition}
Z_{n}^{(\h)}=\sum_{\s\in\Omega_{V_n}}{\exp_{p}}{\left(H_{n}(\sigma)\right)}{\underset{x\in
W_{n-1},y\in
S(x)}{\prod}\left(h_{xy,\sigma(x)\sigma(y)}\right)^{\sigma(x)\sigma(y)}}.
\end{equation}

We recall \cite{M13} that one of the central results of the theory
of probability concerns a construction of an infinite volume
distribution with given finite-dimensional distributions, which is a
well-known {\it Kolmogorov's extension Theorem} \cite{Sh}. In this
paper we apply the  Kolmogorov's Theorem in a $p$-adic context
\cite{KL}. Namely, a $p$-adic probability measure $\m$ on $\Om$,
which is compatible with defined ones $\m_\h^{(n)}$, i.e.
\begin{equation}\label{CM}
\m(\s\in\Om: \s|_{V_n}=\s_n)=\m^{(n)}_\h(\s_n), \ \ \ \textrm{for
all} \ \ \s_n\in\Om_{V_n}, \ n\in\bn,
\end{equation}
exists if the measures $\m_\h^{(n)}$, $n\geq 1$ satisfy the {\it
compatibility condition}, i.e.
\begin{equation}\label{comp}
\sum_{\w\in\Om_{W_n}}\m^{(n)}_\h(\s\vee\w)=\m^{(n-1)}_\h(\s), \ \
\textrm{for all} \ \ \s\in\Om_{V_{n-1}}.
\end{equation}

We should stress that using the compatibility condition for the
Ising model on the Bethe lattice, in the real case (see \cite{Roz}
for review).

Following \cite{M13} if for some function $\h$ the measures
$\m_\h^{(n)}$ satisfy the compatibility condition, then there is a
unique $p$-adic probability measure, which we denote by $\m_\h$,
since it depends on $\h$. Such a measure $\m_\h$ is said to be {\it
a $p$-adic quasi Gibbs measure} corresponding to the $p$-adic
Ising-Vannimenus model. By $Q\cg(H)$ we denote the set of all
$p$-adic quasi Gibbs measures associated with functions $\h=\{\h_x,\
x\in V\}$. If there are at least two distinct $p$-adic quasi Gibbs
measures $\m,\n\in Q\cg(H)$ such that $\m$ is bounded and $\n$ is
unbounded, then we say that {\it a phase transition} occurs. By
another words, one can find two different functions $\sb$ and $\h$
defined on $\bn$ such that there exist the corresponding measures
$\m_\sb$ and $\m_\h$, for which one is bounded, another one is
unbounded. Moreover, if there is a sequence of sets $\{A_n\}$ such
that $A_n\in\Om_{V_n}$ with $|\m(A_n)|_p\to 0$ and
$|\n(A_n)|_p\to\infty$ as $n\to\infty$, then we say that there
occurs a {\it strong phase transition}.

Now one can ask for what kind of functions $\h$ the measures
$\m_\h^{(n)}$ defined by \eqref{measure1} would satisfy the
compatibility condition \eqref{comp}. The following theorem gives
an answer to this question.

\begin{thm}\label{compatibility}\cite{MDA}
The measures $\m^{(n)}_\h$, $ n=1,2,\dots$ (see \eqref{measure1})
satisfy the compatibility condition \eqref{comp} if and only if for
any $n\in \bn$ the following equation holds:

\begin{equation}\label{canonic3}
\begin{cases}
h_{xy,++}\cdot h_{xy,-+}=\prod\limits_{z\in S(y)}
\frac{(ab)^2h_{yz,++}h_{yz,+-}+1}{a^2h_{yz,++}h_{yz,+-}+b^2}\\
h_{xy,--}\cdot h_{xy,+-}=\prod\limits_{z\in S(y)}
\frac{(ab)^2h_{yz,--}h_{yz,-+}+1}{a^2h_{yz,--}h_{yz,-+}+b^2}\\
 h_{xy,++}\cdot h_{xy,+-}=\prod\limits_{z\in S(y)}
\frac{\big((ab)^2h_{yz,++}h_{yz,+-}+1\big)h_{yz,-+}}{\big(a^2h_{yz,--}h_{yz,-+}+b^2\big)h_{yz,+-}}\\
\end{cases}
\end{equation}
where $a=\exp_p(J)$, $b=\exp_p(J_1)$.
\end{thm}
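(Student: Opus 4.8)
The plan is to prove the equivalence by inserting the explicit form \eqref{measure1} into the compatibility condition \eqref{comp} and comparing the two sides bond by bond. First I would fix $\s\in\Om_{V_{n-1}}$ and split the Hamiltonian of a concatenated configuration as $H_n(\s\vee\w)=H_{n-1}(\s)+\Delta_n(\s,\w)$, where $\Delta_n$ collects exactly the bonds created by adjoining the layer $W_n$: the nearest-neighbour bonds $\langle y,z\rangle$ with $y\in W_{n-1}$, $z\in S(y)$; the prolonged next-nearest-neighbour bonds $>x,z<$ joining a grandparent $x\in W_{n-2}$ to its grandchildren $z$; and the one-level bonds $>\overline{z,z'}<$ between siblings $z,z'\in S(y)$. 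Using the additivity of \eqref{ham1} together with the homomorphism property $\exp_p(u+v)=\exp_p(u)\exp_p(v)$ from Lemma \ref{21}, the Boltzmann weight factorises as $\exp_p(H_n(\s\vee\w))=\exp_p(H_{n-1}(\s))\,W_n(\s,\w)$, where each nearest-neighbour bond contributes a factor $a^{\s(y)\s(z)}$, each prolonged bond a factor $b^{\s(x)\s(z)}$, and each sibling bond a factor $\exp_p(J_0)^{\s(z)\s(z')}$, with $a=\exp_p(J)$ and $b=\exp_p(J_1)$.

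Next I would exploit the tree structure. Since the only new couplings attach the spins on $W_n$ to their parents in $W_{n-1}$, their grandparents in $W_{n-2}$, and their siblings, the sum $\sum_{\w\in\Om_{W_n}}$ factorises into a product over the vertices $y\in W_{n-1}$ of a local sum over the child spins $\{\s(z)\}_{z\in S(y)}$. After cancelling the common factor $\exp_p(H_{n-1}(\s))$ and absorbing the $\s$-independent partition-function ratio $Z^{(\h)}_{n-1}/Z^{(\h)}_n$ into the normalisation, the condition \eqref{comp} reduces, for each boundary edge $\langle x,y\rangle$ with $y\in W_{n-1}$, to the requirement that the edge field $h_{xy,\s(x)\s(y)}$ reproduce (up to a constant) this local sum evaluated at the fixed values $\s(x),\s(y)$. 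This yields four vector equations, one for each value of the pair $(\s(x),\s(y))\in\{-,+\}^2$, with the product-over-children $\prod_{z\in S(y)}$ already visible from the factorisation.

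The final and most delicate step is to collapse these four equations into the three scalar relations in \eqref{canonic3}. Here I would form the three independent products $h_{xy,++}h_{xy,-+}$, $h_{xy,--}h_{xy,+-}$ and $h_{xy,++}h_{xy,+-}$; multiplying the local sums across the grandparent spin $\s(x)=\pm$ is precisely the combination that eliminates the sibling factor $\exp_p(J_0)^{\s(z)\s(z')}$, so that only $a$ and $b$ survive and each child $z$ contributes independently the rational factor displayed on the right-hand sides. I expect this cancellation of the one-level interaction $J_0$, together with the verification that exactly these three products (and no others) close the system, to be the main obstacle; the remainder is bookkeeping of the $\pm1$ exponents $\s(x)\s(y)$. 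The converse implication then follows by reversing the algebra: assuming \eqref{canonic3}, one rebuilds the factorised local sums, multiplies over $y\in W_{n-1}$, and recovers \eqref{comp} for every $n$.
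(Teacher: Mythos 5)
Your skeleton is the right one, and it is in fact the argument of \cite{MDA}: note that the present paper does not prove Theorem \ref{compatibility} at all --- it is imported verbatim from \cite{MDA} (see also \cite{MSK1} for arbitrary order), so the comparison is with the proof there. As in your plan, one splits $H_n(\sigma\vee\omega)=H_{n-1}(\sigma)+\Delta_n(\sigma,\omega)$, factors $\sum_{\omega\in\Omega_{W_n}}$ into local sums $L_y(\sigma(x),\sigma(y))$ indexed by $y\in W_{n-1}$ (with parent $x$), reduces \eqref{comp} to the four proportionalities $h_{xy,\epsilon\delta}^{\epsilon\delta}=D_y^{-1}L_y(\epsilon,\delta)$, $(\epsilon,\delta)\in\{-,+\}^2$, and then forms the three products you name. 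One correction of emphasis: what those products eliminate is the free constant $D_y$ (four relations modulo one constant give exactly three ratio equations, e.g.\ $h_{xy,++}h_{xy,-+}=L_y(+,+)/L_y(-,+)$); the product structure over $S(y)$ on the right-hand sides is already present \emph{before} any products are formed, because in \cite{MDA} each new spin $\omega(z)$ couples only to its parent $y$ and grandparent $x$, whence
\[
L_y(+,+)=\prod_{z\in S(y)}\frac{(ab)^2h_{yz,++}h_{yz,+-}+1}{ab\,h_{yz,+-}},\qquad
L_y(-,+)=\prod_{z\in S(y)}\frac{a^2h_{yz,++}h_{yz,+-}+b^2}{ab\,h_{yz,+-}},
\]
and the first line of \eqref{canonic3} is their ratio (the other two lines are analogous).

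The genuine gap is the central claim of your third paragraph: that multiplying the local sums across $\sigma(x)=\pm$ cancels the sibling factors $\exp_p(J_0)^{\omega(z)\omega(z')}$. It does not. Put $c=\exp_p(J_0)$ and take $k=2$ with children $z_1,z_2$; writing $u_i=ab\,h_{yz_i,++}$, $v_i=(ab\,h_{yz_i,+-})^{-1}$, $s_i=(a/b)h_{yz_i,++}$, $t_i=(b/a)h_{yz_i,+-}^{-1}$, one gets $L_y(+,+)=c(u_1u_2+v_1v_2)+c^{-1}(u_1v_2+u_2v_1)$ and $L_y(-,+)=c(s_1s_2+t_1t_2)+c^{-1}(s_1t_2+s_2t_1)$, so the ratio is $c$-free iff $(u_1u_2+v_1v_2)(s_1t_2+s_2t_1)=(u_1v_2+u_2v_1)(s_1s_2+t_1t_2)$; evaluated at $h\equiv1$ this reads $(ab)^2+(ab)^{-2}=(a/b)^2+(b/a)^2$, i.e.\ $(a^4-1)(b^4-1)=0$, which fails for generic $a,b\in\ce_p$ (and $b^4=1$ is impossible for $b\in\ce_p\setminus\{1\}$ when $p\geq3$). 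So with $J_0\neq0$ the local sums do not factorize over $S(y)$, no combination of the four relations removes $c$, and the $c$-free product form of \eqref{canonic3} could not emerge --- your proof would stall exactly where you predicted the ``main obstacle''. The resolution is not a cleverer combination but the model: in \cite{MDA} and \cite{MSK1}, where the theorem is actually proved, the Hamiltonian has no one-level next-nearest-neighbour term; the $J_0$-sum in \eqref{ham1} never enters \eqref{canonic3} and must be taken absent (equivalently $J_0=0$) for the statement to be true as written --- keeping $J_0\neq0$ changes the compatibility equations themselves. With that repair, your argument, including the converse (define $D_y$ from one of the four relations and reverse the computation, multiplying over $y\in W_{n-1}$), goes through and coincides with the proof in \cite{MDA}.
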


\end{document}